\theoremstyle{definition}
\newtheorem{theorem}{Theorem}[section]
\newtheorem{lemma}[theorem]{Lemma}
\newtheorem{corollary}[theorem]{Corollary}
\newtheorem{problem}[theorem]{Problem}
\def\EE{{\mathbb E}}
\def\PP{{\mathbb P}}
\def\NN{{\mathbb N}}
\def\ZZ{{\mathbb Z}}
\def\({\left(}
\def\){\right)}
\newcommand{\closure}[2]{\left\langle #1 \right\rangle_{#2}}
\newcommand{\indicator}[1]{\mathbbm{1}_{\{ #1 \}}}
\newenvironment{proofOfStatementOfTheorem}[2]{\begin{proof}[\textit{Proof of Statement #1 of Theorem \ref{#2}}]}{\end{proof}}
\begin{document}

\begin{frontmatter}

\title{The time of graph bootstrap percolation}

\begin{aug}
\author{\fnms{Karen} \snm{Gunderson}\thanksref{t2}\ead[label=e1]{karen.gunderson@umanitoba.ca}},
\author{\fnms{Sebastian} \snm{Koch}\thanksref{}\ead[label=e2]{sk629@cam.ac.uk}},
\and
\author{\fnms{Micha{\l}} \snm{Przykucki}\thanksref{t1}\ead[label=e3]{przykucki@maths.ox.ac.uk}}

\thankstext{t1}{Supported in part by EU project MULTIPLEX no. 317532.}
\thankstext{t2}{Research carried out while affiliated with the Heilbronn Institute for Mathematical Research at the University of Bristol.}

\affiliation{University of Manitoba; University of Cambridge; University of Oxford}

\address{K. Gunderson \\
Department of Mathematics \\
University of Manitoba \\
Winnipeg, MB R3T 2N2 \\
Canada \\
\printead{e1}}

\address{S. Koch \\
Department of Pure Mathematics and Mathematical Statistics \\
University of Cambridge \\
Cambridge CB3 0WB \\
United Kingdom \\
\printead{e2}}

\address{M. Przykucki \\
St Anne's College \\
University of Oxford \\ 
Oxford OX2 6HS \\
United Kingdom \\
\printead{e3}}

\runauthor{Gunderson, Koch, and Przykucki}
\runtitle{Time of graph bootstrap percolation}
\end{aug}

\begin{abstract}
Graph bootstrap percolation, introduced by Bollob{\'a}s in 1968, is a cellular automaton defined as follows. Given a ``small'' graph $H$ and a ``large'' graph $G = G_0 \subseteq K_n$, in consecutive steps we obtain $G_{t+1}$ from $G_t$ by adding to it all new edges $e$ such that $G_t \cup e$ contains a new copy of $H$. We say that $G$ percolates if for some $t \geq 0$, we have $G_t = K_n$.

For $H = K_r$, the question about the size of the smallest percolating graphs was independently answered by Alon, Frankl and Kalai in the 1980's. Recently, Balogh, Bollob{\'a}s and Morris considered graph bootstrap percolation for $G = G(n,p)$ and studied the critical probability $p_c(n,K_r)$, for the event that the graph percolates with high probability. In this paper, using the same setup, we determine, up to a logarithmic factor, the critical probability for percolation by time $t$ for all $1 \leq t \leq C \log\log n$.
\end{abstract}

\begin{keyword}[class=AMS]
\kwd[primary ]{60K35}
\kwd[; secondary ]{60C05}
\end{keyword}

\begin{keyword}
\kwd{bootstrap percolation}
\kwd{weak saturation}
\end{keyword}

\end{frontmatter}

\section{Introduction}
\label{sec:intro}

Cellular automata, introduced by von Neumann \cite{theoryautomata} after a suggestion of Ulam \cite{processestransformations}, are dynamical systems acting on graphs using local and homogeneous update rules. The $H$-bootstrap percolation process is one example of such an automaton and can be described as follows. Given a fixed graph $H$ and a graph $G \subseteq K_n$, set $G_0 = G$ and then, for each $t=0,1,2,\ldots$, let
\begin{equation}
\label{eq:graphBootstrap}
 G_{t+1} = G_t \cup \{e \in E(K_n): \exists H \mbox{ with } e \in H \subseteq G_t \cup {e} \}.
\end{equation} 
Let $\closure{G}{H} = \bigcup_{t=0}^{\infty} G_t$ denote the \textit{closure} of $G$ under $H$-bootstrap percolation. We say that $G$ \textit{percolates}  in the $H$-bootstrap process (or $H$-percolates), if $\closure{G}{H} = K_n$. (See Figure \ref{figure:K_r-BP}).

The notion of $H$-percolation, introduced by Bollob{\'a}s in 1968 \cite{weakSaturation} under the name of \textit{weak saturation}, has been extensively studied in the case where $H$ is a complete graph. Initially, the extremal properties of the $H$-bootstrap process attracted the most attention. Alon \cite{alonSaturation}, Frankl \cite{franklSaturation} and Kalai \cite{kalaiSaturation} independently confirmed a conjecture of Bollob{\'a}s and proved that the smallest $K_r$-percolating graphs on $n$ vertices have size $\binom{n}{2}-\binom{n-r+2}{2}$.

\begin{figure}[htb] \centering
  \begin{tikzpicture}
    \tikzstyle{vertex}=[draw,shape=circle,minimum size=10pt,inner sep=0pt]

    \foreach \name/\x/\y in {1/180, 2/120, 3/60, 4/0, 5/300, 6/240}
      \node[vertex] (P-\name) at (\x:1) {$\name$};
    \foreach \from/\to in {1/2, 5/3,4/2, 1/3, 6/2, 2/3, 1/4, 4/5, 6/5}
      {\draw (P-\from) -- (P-\to);}
    \foreach \from/\to in {4/3}
      {\draw[dashed] (P-\from) -- (P-\to);}
    \draw[black] (0,-1.5) node {$t=0$};
      
    \begin{scope}[xshift=3.5cm]
      \foreach \name/\x/\y in {1/180, 2/120, 3/60, 4/0, 5/300, 6/240}
	\node[vertex] (P-\name) at (\x:1) {$\name$};
      \foreach \from/\to in {1/2, 5/3,4/2, 1/3, 6/2, 2/3, 1/4, 4/5, 6/5}
	{\draw (P-\from) -- (P-\to);}
      \draw[very thick] (P-3) -- (P-4);
      \foreach \from/\to in {1/5, 2/5}
	{\draw[dashed] (P-\from) -- (P-\to);}
    \draw[black] (0,-1.5) node {$t=1$};
    \end{scope}
      
    \begin{scope}[xshift=7cm]
      \foreach \name/\x/\y in {1/180, 2/120, 3/60, 4/0, 5/300, 6/240}
	\node[vertex] (P-\name) at (\x:1) {$\name$};
      \foreach \from/\to in {1/2, 5/3,4/2, 1/3, 6/2, 2/3, 1/4, 4/5, 6/5}
	{\draw (P-\from) -- (P-\to);}
      \foreach \from/\to in {3/4, 1/5, 2/5}
	{\draw[very thick] (P-\from) -- (P-\to);}
      \foreach \from/\to in {1/6, 3/6, 4/6}
	{\draw[dashed] (P-\from) -- (P-\to);}
    \draw[black] (0,-1.5) node {$t=2$};
    \end{scope}
      
    \begin{scope}[xshift=10.5cm]
      \foreach \name/\x/\y in {1/180, 2/120, 3/60, 4/0, 5/300, 6/240}
	\node[vertex] (P-\name) at (\x:1) {$\name$};
      \foreach \from/\to in {1/2, 5/3,4/2, 1/3, 6/2, 2/3, 1/4, 4/5, 6/5}
	{\draw (P-\from) -- (P-\to);}
      \foreach \from/\to in {3/4, 1/5, 2/5, 1/6, 3/6, 4/6}
	{\draw[very thick] (P-\from) -- (P-\to);}
    \draw[black] (0,-1.5) node {$t=3$};
    \end{scope}
    
  \end{tikzpicture}
  \caption{An example of the $K_4$-bootstrap percolation process. Dashed edges are added to the graph on the next time step.}
  \label{figure:K_r-BP}
\end{figure}
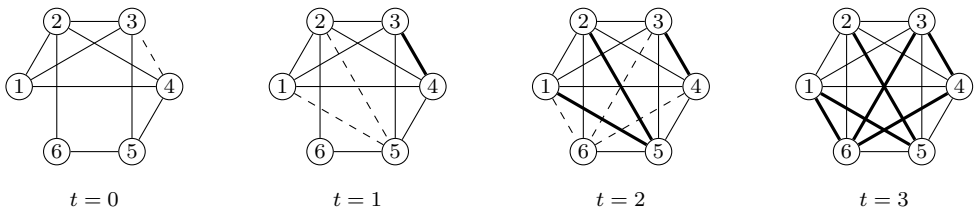

Recently, Bollob{\'a}s \cite{firstGraphBootstrap} observed a strong connection between weak saturation and $r$-neighbour bootstrap percolation, a dynamical process suggested in 1979 by Chalupa, Leath and Reich \cite{bootstrapbethe}. For an integer $r \geq 2 $, the $r$-neighbour bootstrap process on a graph $G = (V,E)$ with an `initial set' of vertices $A \subseteq V$ is defined by setting $A_0 = A$ and for $t = 0,1,2,\ldots$, defining 
\begin{equation}
\label{eq:bootstrapPercolation}
 A_{t+1} = A_t \cup \{v \in V: |N(v) \cap A_t| \geq r \},
\end{equation}
where $N(v)$ is the set of neighbours of $v$ in $G$. The set $\closure{A}{} = \bigcup_{t=0}^{\infty} A_t$ is the closure of $A$ and we say that $A$ percolates if $\closure{A}{} = V$.  Often, the vertices in the set $A_t$ are called `infected' and the remaining vertices are `healthy'. The usual question asked in the context of $r$-neighbour bootstrap percolation is the following: if the vertices of $G$ are initially infected independently at random with probability $p$, for what values of $p$ is percolation likely to occur? The probability of percolation is clearly non-decreasing in $p$ hence it is natural to define the \textit{critical probability} $p_c(G,r)$ as
\begin{equation}
\label{eq:pcrNbr}
 p_c(G,r) = \inf \{ p : \PP_p (\closure{A}{} = V(G)) \geq 1/2 \}.
\end{equation}
The study of critical probabilities has brought numerous and often very sharp results for various graphs $G$ and the values of the infection threshold. For example, van Enter \cite{strayleysargument} and Schonmann \cite{cellularbehaviour} studied $r$-neighbour bootstrap percolation on $\ZZ^d$, Holroyd \cite{sharpmetastability}, Balogh, Bollob{\'a}s and Morris \cite{bootstrapthree}, Balogh, Bollob{\'a}s, Duminil-Copin and Morris \cite{sharpbootstrapall} analysed finite grids, while Balogh and Pittel \cite{randomregular}, Janson, {\L}uczak, Turova and Vallier \cite{bootstrapgnp} and Bollob{\'a}s, Gunderson, Holmgren, Janson and Przykucki \cite{galtonWatson} worked with random graphs.

Motivated by this approach, Balogh, Bollob{\'a}s and Morris \cite{graphBootstrap} defined the critical probability for $H$-bootstrap percolation on $K_n$ to be
\begin{equation}
\label{eq:pcHBtrp}
 p_c(n,H) = \inf \{ p : \PP_p (\closure{G_{n,p}}{H} = K_n) \geq 1/2 \},
\end{equation}
where $G_{n,p}$ is the Erd{\H o}s-R{\'e}nyi random graph, obtained by choosing every edge of $K_n$ independently at random with probability $p$. In \cite{graphBootstrap}, they showed that for all $r \geq 4$, taking $\lambda(r) = \left ( \binom{r}{2}-2 \right ) / (r-2)$ and $n \in \NN$ sufficiently large,
\begin{equation}
\label{eq:pcHBtrpBounds}
  \frac{n^{-1/\lambda(r)}}{2e \log n} \leq p_c(n,K_r) \leq n^{-1/\lambda(r)} \log n.
\end{equation}

In this paper we focus on a different question related to $K_r$-bootstrap percolation. Namely, for what values of $p$ is percolation likely to occur \textit{by time $t$}? Defining $K_r$-bootstrap percolation as in \eqref{eq:graphBootstrap}, let $T = T(n, r, G_0) = \min \{ t: G_t = K_n \text{ in the $K_r$-bootstrap process} \}$. Define the \textit{critical probability for percolation by time $t$} to be 
\begin{equation}
\label{eq:pcHBtrpTime}
 p_c(n,r,t) = \inf \{p : \PP_{p} (T \leq t) \geq 1/2 \}.
\end{equation}

For notational convenience, for any $r \geq 4$ and any $t \geq 1$, set 
\begin{equation}
\label{eq:initialNotation}
\tau = \tau(r) = \binom{r}{2}-1,\ \quad e_t = \tau^t, \text{ and } \quad v_t = (r-2) \frac{\tau^t-1}{\tau-1} + 2.
\end{equation}
 The following theorem is the main result of this paper.
\begin{theorem}
\label{thm:criticalProb}
Let $r \geq 4$ and $t = t(n) \leq \frac{\log \log n}{3\log \tau}$. Let $(p_n)_{n=1}^{\infty}$ be a sequence of probabilities, let $\omega(n) \to \infty$ and let $T = T(n,r,G_{n,p_n})$. Under the $K_r$-bootstrap process,
\begin{enumerate}[(i)]
 \item if, for all $n$, $p_n \geq n^{-(v_t-2)/e_t} \log n$, then $\PP_{p_n} (T \leq t) \to 1$ as $n \to \infty$ and \label{stat:1}
 \item if, for all $n$, $p_n \leq n^{-(v_t-2)/e_t} / \omega (n)$, then $\PP_{p_n} (T \leq t) \to 0$ as $n \to \infty$. \label{stat:2}
\end{enumerate}
\end{theorem}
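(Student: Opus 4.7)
The plan is to introduce, for any edge $e = uv$, a canonical tree-like $k$-infection structure $T_k(uv)$ and exploit it in both directions. Set $T_0(uv) = (\{u,v\},\emptyset)$, and recursively build $T_k(uv)$ by adjoining $r-2$ fresh vertices $w_1,\ldots,w_{r-2}$ and, for each of the $\tau$ non-$uv$ edges $f$ of the $K_r$ on $\{u,v,w_1,\ldots,w_{r-2}\}$, a vertex-disjoint copy of $T_{k-1}(f)$. A straightforward induction shows $|V(T_k)| = v_k$, $|E(T_k)| = e_k$, and that $E(T_k) \subseteq G_0$ forces $uv \in G_k$.

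\emph{Upper bound \ref{stat:1}.} Fix an edge $e = uv$ and let $X_e$ count labelled embeddings of $T_t(uv)$ into $G_{n,p_n}$ that map $(u,v)$ to $(u,v)$. Standard counting gives $\EE[X_e] = \Theta(n^{v_t - 2} p_n^{e_t}) \geq C (\log n)^{e_t}$, which under the hypothesis $t \leq \log\log n / (3\log \tau)$ grows faster than any polynomial in $\log n$. I would apply Janson's inequality: the main task is to estimate the dependency sum
\begin{equation*}
\Delta_e = \sum_{T \ne T'} \PP\bigl(E(T) \cup E(T') \subseteq G_{n,p_n}\bigr),
\end{equation*}
taken over ordered pairs of distinct labelled copies of $T_t(uv)$ sharing at least one edge. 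Partitioning pairs by the isomorphism type of their shared sub-forest, each extra shared edge saves a factor $(n^{(v_t-2)/e_t} p_n)^{-1} = (\log n)^{-1}$ in the embedding count, so $\Delta_e$ is dominated by a geometric series yielding a bound strong enough that Janson gives $\PP(X_e = 0) = n^{-\omega(1)}$, beating $n^{-2}$ for a union bound over the $\binom{n}{2}$ candidate edges.

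\emph{Lower bound \ref{stat:2}.} Since $\PP(T \leq t) = \PP(G_t = K_n) \leq \PP(e \in G_t)$ for any fixed edge $e = uv$, it suffices to show $\PP(e \in G_t) \to 0$. As $\PP(e \in G_0) = p_n \to 0$, attention is on the event $\{e \in G_t\} \setminus \{e \in G_0\}$. If it occurs, there is a minimal subgraph $H \subseteq G_0$ with $\{u,v\} \subseteq V(H)$ and $e \notin E(H)$ whose edges, viewed as an initial configuration, infect $e$ within $t$ steps. Letting $\mathcal{M}_t$ denote the (finite) set of isomorphism types of such minimal $t$-infectors, a Markov/embedding-count argument gives
\begin{equation*}
\PP(e \in G_t,\, e \notin G_0) \leq \sum_{H \in \mathcal{M}_t} O\bigl(n^{|V(H)| - 2}\, p_n^{|E(H)|}\bigr).
\end{equation*}
The central combinatorial claim is that every $H \in \mathcal{M}_t$ satisfies $(|V(H)| - 2)/|E(H)| \leq (v_t - 2)/e_t$, with equality realised by $T_t$. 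Granting this, each summand is $O((n^{(v_t-2)/e_t} p_n)^{|E(H)|}) = O(\omega(n)^{-|E(H)|}) = o(1)$, and the finiteness of $\mathcal{M}_t$ for fixed $t$ closes the estimate.

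\emph{Main obstacle.} The genuinely hard step is the extremal claim in the lower bound: that among all $t$-infectors of an edge, the tree-like $T_t$ maximises the density ratio $(|V|-2)/|E|$. I would prove this by induction on $t$, identifying at the final infection step a set of $r-2$ vertices and the $\tau$ induced non-$uv$ edges --- each infected by time $t-1$ via some sub-witness $H_f \subseteq H$ --- then applying the inductive bound to each $H_f$, and showing that sharing of vertices or edges across branches of the recursion can only reduce the ratio relative to the vertex-disjoint tree-like choice. Arranging a canonical decomposition when $H$ admits several possible infection histories is the main bookkeeping burden; the Janson dependency calculation in the upper bound, while tedious, is a routine consequence of the same recursive structure.
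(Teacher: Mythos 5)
Your overall roadmap matches the paper's: the recursively defined graph $T_t$ is (up to relabelling) the paper's $F_t$, the upper bound is a Janson/second-moment argument over anchored embeddings of $F_t$, and the lower bound is a first-moment count over minimal witness graphs. However, both places you flag as ``tedious but routine'' or ``bookkeeping'' are exactly where the real mathematical content of the paper lives, and your sketch does not supply what is needed there.

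\emph{Upper bound.} Your claim that ``each extra shared edge saves a factor $(\log n)^{-1}$'' and hence $\Delta_e$ is a geometric series is not correct as stated, and it masks the key difficulty. The relevant parametrisation is by shared \emph{vertices}, not shared edges: if the second copy has $k$ non-anchor vertices outside the first, the contribution to $\Delta$ is of order $\mu\, n^{k} p^{m'}$, where $m'$ is the number of edges of the second copy incident to those $k$ fresh vertices. For this to decay geometrically you must know $m' \ge k(1+\varepsilon)\tfrac{e_t}{v_t-2}$ for some $\varepsilon>0$ that is not too small, i.e., that for every proper $L\subsetneq V(F_t)\setminus\{1,2\}$ one has $e_t(L,F_t)/|L| > e_t/(v_t-2)$ with a quantitative gap --- precisely the quantity $\varepsilon_t$ defined in \eqref{eq:eps} and bounded below in Theorem \ref{thm:epsilon_tLower}. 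This gap is not automatic (there is no a priori guarantee $\varepsilon_t\ge 0$), the density ratio is not monotone in $|L|$ (the minimiser takes roughly a $4/((r+1)(r-2))$ fraction of the vertices), and since $t$ grows with $n$ the gap $\varepsilon_t$ must be controlled at a rate that beats the polylog factors in $p_n$. Establishing this (Lemmas \ref{claim:full-blobs}--\ref{claim:all-but-one-blobs-2} and the induction in Theorem \ref{thm:epsilon_tLower}) is the centre of the paper, not a routine consequence of the recursive structure.

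\emph{Lower bound.} Your extremal claim $(|V(H)|-2)/|E(H)| \le (v_t-2)/e_t$ for every minimal $t$-infector $H$ is correct, but it does not follow from the elementary induction you propose. It is the conjunction of two facts: $|V(H)|\le v_t$ (an easy induction, Lemma \ref{lem:minimalsSmall}) and $|E(H)|\ge \lambda(|V(H)|-2)+1$ (Corollary \ref{cor:manyEdges}). The second is a corollary of a weak-saturation-type extremal lemma for $r$-witness sets (Lemma \ref{lem:witnessManyEdges}, from \cite{graphBootstrap}) whose known proofs use exterior-algebra/matrix-rank methods, not combinatorial decomposition. Your proposal to recurse through the final $K_r$-step and argue that ``sharing across branches can only reduce the ratio'' is exactly the kind of extremal control that these algebraic tools are needed for; overlapping sub-witnesses can a priori push the ratio in the wrong direction, and dismissing this as bookkeeping is a genuine gap. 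Finally, $t$ is not fixed (it may grow like $\log\log n$), so the finiteness of the set of isomorphism types for fixed $t$ does not close the union bound; one needs the explicit $2^{\binom{j}{2}}$ count together with the density estimates, and a verification that the resulting sum still vanishes.
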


Thus, Theorem \ref{thm:criticalProb} shows that for all $r \geq 4$ and $1 \leq t \leq \frac{\log\log n}{3 \log \tau}$, and $\omega(n) \to \infty$, for $n$ sufficiently large we have
\begin{equation}\label{eq:crit-prob-bds}
\frac{n^{-(v_t-2)/e_t}}{\omega (n)} \leq p_c(n, r, t) \leq n^{-(v_t-2)/e_t} \log n.
\end{equation}

Similar questions related to the time of $r$-neighbour bootstrap percolation on grids have recently been studied by Bollob{\'a}s, Holmgren, Smith and Uzzell \cite{densebootstraptime}, Bollob{\'a}s, Smith and Uzzell \cite{densebootstraptimeall} and by Balister, Bollob{\'a}s and Smith \cite{timeOfBootstrap}. The time of the $r$-neighbour bootstrap process on the random graph $G(n,p)$ was analysed in \cite{bootstrapgnp}.

Before we continue, let us briefly discuss how the bounds on the critical probability for percolation by time $t$ in Theorem \ref{thm:criticalProb} relate to the bounds on $p_c(n,K_r)$ in \eqref{eq:pcHBtrpBounds}. By the definitions of $p_c(n,K_r)$ in \eqref{eq:pcHBtrp} and of $p_c(n,r,t)$ in \eqref{eq:pcHBtrpTime}, we clearly have $p_c(n,K_r) \leq p_c(n,r,t)$. Ignoring the polylogarithmic factors in the critical probability, we see that the exponent of $n$ in \eqref{eq:crit-prob-bds} can be re-written using the identities in \eqref{eq:initialNotation} as follows, writing $\lambda = \lambda(r)$,
\begin{equation} \label{eq:criticalExponent}
  -\frac{v_t-2}{e_t} = -\frac{(r-2) \frac{\tau^t-1}{\tau-1}}{\tau^t} = -\frac{r-2}{\binom{r}{2}-2} \frac{\tau^t-1}{\tau^t} = - \frac{1}{\lambda} + \frac{1}{\lambda \tau^t}.
\end{equation}
For $t = \frac{\log \log n}{3\log \tau}$, which is the maximum value of $t$ covered by Theorem \ref{thm:criticalProb}, in \eqref{eq:criticalExponent} we obtain
\[
 - \frac{1}{\lambda} + \frac{1}{\lambda \tau^t} = - \frac{1}{\lambda} + \frac{1}{\lambda \tau^{(\log_{\tau} \log n)/3}} = - \frac{1}{\lambda} + \frac{1}{\lambda (\log n)^{1/3}}.
\]
Hence the exponent of $n$ in \eqref{eq:crit-prob-bds} tends to the exponent in \eqref{eq:pcHBtrpBounds} for $t = \frac{\log \log n}{3\log \tau}$. In fact, if one managed to show that \eqref{eq:crit-prob-bds} holds all the way up to $t = \frac{\log \log n}{\log \tau}$ then the bounds in \eqref{eq:pcHBtrpBounds} and \eqref{eq:crit-prob-bds} would match up to polylogarithmic factors.

The proofs of both statements of Theorem \ref{thm:criticalProb} rely on the properties of a family of graphs, denoted $\{F_t : t \geq 1\}$, that are described in detail in Section \ref{sec:FtProperties}.  For each $t$, there is a pair of vertices in $V(F_t)$ so that if $F_t$ occurs as a subgraph of $G_0$, then that pair is guaranteed to be added to the graph by time $t$.  The graph $F_t$ is thought of as `anchored' on that special pair of vertices.  

To prove Statement \eqref{stat:1} of Theorem \ref{thm:criticalProb}, Janson's inequality is used to bound from below the probability that a particular pair $\{x,y\}$ is contained as the anchor vertices in some copy of $F_t$.  To establish a bound in this way, estimates are needed on the probability that two overlapping copies of $F_t$ occur in $G_0$.  This amounts to determining the minimum possible ratio of edges to vertices in some non-trivially overlapping pair.  It turns out that the minimum ratio is not obtained for one of the extreme cases, i.e., neither for two copies of $F_t$ that share only one vertex, nor for two copies that share all but one vertex. Even though we do not prove it directly in this paper, the minimal density of two overlapping copies of $F_t$ is not monotone in the size of their common part and it can be shown that, as $t \to \infty$, the two overlapping copies of $F_t$ that minimise the edge-to-vertex ratio share an approximately $4/((r+1)(r-2))$ proportion of the vertex set. Bounding this ratio from below for all possible configurations of two such copies is the main challenge in the proof of the upper bound on $p_c(n,r,t)$, and is dealt with in detail in Section \ref{sec:overlappingCopies}.

To prove Statement \eqref{stat:2} of Theorem \ref{thm:criticalProb} we employ two extremal results about graphs that add an edge $e$ to the graph in at most $t$ time steps: one of them to bound the number of their vertices from above, and one (a corollary of a highly nontrivial result in \cite{graphBootstrap}) to bound their edge density from below. Then, for $p$ as in Statement \eqref{stat:2} of Theorem \ref{thm:criticalProb}, we show that with high probability no such graph can be found in $G_{n,p}$. This completes the proof of our main result.

The remaining sections of the paper are organised as follows. In Section \ref{sec:r=3} we briefly discuss the $K_3$-bootstrap percolation process which behaves differently than $K_r$-bootstrap processes when $r \geq 4$.  In Section \ref{sec:FtProperties}, we introduce the graphs $F_t$ that are the main focus of the proofs to come and prove some key properties.  In Section \ref{sec:overlappingCopies}, which is the crucial part of our argument, we prove some properties of graphs consisting of two overlapping copies of $F_t$. In Sections \ref{sec:upperBound} and \ref{sec:lowerBound} we prove Statements \eqref{stat:1} and \eqref{stat:2} of Theorem \ref{thm:criticalProb} respectively. Finally, in Section \ref{sec:questions}, some open problems are stated.

\section{$K_3$-bootstrap percolation}
\label{sec:r=3}

In this section we discuss the special case of the $K_3$-bootstrap process. Observe that a graph $G$ percolates in $K_3$-bootstrap percolation if and only if $G$ is connected. Also, at every time step each non-edge between vertices at distance $2$ is added to the graph. Therefore, if $G$ is a connected graph with diameter $d$, then the diameter of the graph obtained from $G$ after one step of the $K_3$-bootstrap process is $\lceil d/2 \rceil$. Hence, $G$ percolates in $\lceil \log_2 d \rceil$ time steps.

The diameter of random graphs was investigated by Bollob{\'a}s \cite{randomDiameter} who proved the following theorem.
\begin{theorem}
\label{thm:diameterOfGnp} Let $G_{n,p}$ be the Erd{\H o}s-R{\'e}nyi random graph.
 \begin{enumerate}
  \item \label{item:diameterOfGnp1} Suppose $p^2n - 2 \log n \to \infty$ and $n^2 (1 - p) \to \infty$. Then $G_{n,p}$ has diameter $2$ whp.
  \item \label{item:diameterOfGnp2} Suppose the functions $d = d(n) \geq 3$ and $0 < p = p(n) < 1$ satisfy $(\log n)/d-3 \log \log n \to \infty$, $p^d n^{d-1} - 2 \log n \to \infty$ and $p^{d-1} n^{d-2} - 2 \log n \to -\infty$. Then $G_{n,p}$ has diameter $d$ whp.
 \end{enumerate}
\end{theorem}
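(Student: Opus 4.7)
The plan is to handle the two parts using first and second moment methods applied to pairwise distances in $G_{n,p}$.

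For the diameter $2$ regime, the upper bound follows by noting that for an ordered pair of distinct vertices $(u,v)$ the probability they share no common neighbour is $(1-p^2)^{n-2} \leq \exp(-p^2(n-2))$, so the expected number of pairs at distance greater than $2$ is at most $n^2 \exp(-p^2(n-2))$, which tends to $0$ under $p^2 n - 2\log n \to \infty$; Markov's inequality then finishes this direction. For the lower bound diameter $\geq 2$, the expected number of non-edges $\binom{n}{2}(1-p)$ diverges under $n^2(1-p) \to \infty$, and since distinct non-edge indicators are independent a routine second moment argument produces a non-edge whp.

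For the diameter $d$ regime with $d \geq 3$, the upper bound is obtained by letting $N_{u,v}$ count paths of length exactly $d$ joining $u$ and $v$ in $G_{n,p}$, noting $\EE[N_{u,v}] \sim n^{d-1}p^d$, and applying Janson's inequality
\[
\PP[N_{u,v}=0] \leq \exp\!\left(-\frac{\EE[N_{u,v}]^2}{\EE[N_{u,v}] + \Delta}\right),
\]
where $\Delta$ sums over pairs of candidate paths sharing at least one edge. A standard computation shows $\Delta = o(\EE[N_{u,v}]^2)$ in the relevant regime, giving $\PP[N_{u,v}=0] \leq \exp(-\Omega(n^{d-1}p^d))$; a union bound over pairs together with $p^d n^{d-1} - 2\log n \to \infty$ yields diameter $\leq d$ whp. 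For the matching lower bound diameter $\geq d$, I would apply the second moment method to the number $Y$ of ordered pairs at distance greater than $d-1$: a bound $\EE[Y] \sim n^2 \exp(-O(n^{d-2}p^{d-1})) \to \infty$ comes from $p^{d-1}n^{d-2} - 2\log n \to -\infty$, and a covariance estimate then drives $\EE[Y^2]/\EE[Y]^2 \to 1$, forcing $Y > 0$ whp.

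The main obstacle is the second moment calculation in the lower bound on the diameter. Two pairs of endpoints can share zero, one, or two vertices, and for each overlap class one must estimate the joint probability that both pairs lie at distance greater than $d-1$. The assumption $(\log n)/d - 3\log\log n \to \infty$ is precisely what keeps $d$ sufficiently small compared to $\log n$ for all these estimates to agree with $\EE[Y]^2$, and it simultaneously ensures that paths of length strictly less than $d-1$ contribute negligibly compared with those of length $d-1$, so that the two-sided estimate on $\PP[\mathrm{dist}(u,v) > d-1]$ is sharp enough to drive both moments.
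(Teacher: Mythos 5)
The paper does not prove Theorem \ref{thm:diameterOfGnp}; it quotes the result directly from Bollob\'as's paper \emph{The diameter of random graphs} (cited as \cite{randomDiameter}) and uses it only to describe the behaviour of the $K_3$-bootstrap process in Section~\ref{sec:r=3}. So there is no internal proof to compare your sketch against, and your argument must be assessed on its own terms. Your overall plan -- first moment plus Janson for the upper bound on the diameter, second moment for the lower bound -- is a reasonable modern route (Bollob\'as's 1981 proof is more hands-on, as it predates Janson's inequality), and the diameter-$2$ part is essentially correct as written.

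However, there is a genuine gap in the $d\geq 3$ upper bound. You claim that establishing $\Delta=o\big(\EE[N_{u,v}]^2\big)$ yields $\PP[N_{u,v}=0]\leq\exp\!\big(-\Omega(n^{d-1}p^d)\big)$, but that inference does not follow from the Janson bound $\exp\!\big(-\mu^2/(\mu+\Delta)\big)$. If, say, $\Delta\asymp\mu^{3/2}$ then $\Delta=o(\mu^2)$ but $\mu^2/(\mu+\Delta)\asymp\mu^{1/2}$, which is \emph{not} $\Omega(\mu)$. What your union bound over the $\sim n^2$ pairs actually requires is that $\mu^2/(\mu+\Delta)$ exceed $2\log n$ by a diverging amount, and near the threshold (when $\mu=n^{d-1}p^d$ is only barely larger than $2\log n$) this forces $\Delta$ to be genuinely small -- in fact $\Delta$ must be $o(\mu-2\log n)$, not merely $o(\mu^2)$. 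A careful version of the overlap count (dominant contribution from two paths sharing a prefix, giving roughly $\Delta\lesssim \mu^2/(np)$, and then invoking $np\gtrsim n^{1/d}\geq(\log n)^3$, which is exactly what the hypothesis $(\log n)/d-3\log\log n\to\infty$ buys you) does give the needed estimate, but this calculation is where all the work lives and you should not describe it as ``standard'' while simultaneously stating a sufficient condition that is too weak. Similarly, the second-moment covariance estimate for the lower bound must handle the fact that $\EE[Y]$ may diverge arbitrarily slowly, so correlations between pairs $\{u,v\}$ and $\{u',v'\}$ -- even disjoint ones, through shared intermediate vertices -- need quantitative control, not just the qualitative remark that a ``covariance estimate'' drives the ratio to $1$.
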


In order to re-phrase this result in the form of intervals for $p$ in which the diameter is constant with high probability, let $\omega(n) = o (\log n)$ tend to infinity arbitrarily slowly. Clearly, if $p \geq 1-1/(n^2 \omega(n))$ then whp. $G_{n,p} = K_n$ which has diameter $1$. Simplifying a bit, Theorem \ref{thm:diameterOfGnp} implies that if
\[
 \sqrt{\frac{2 \log n + \omega(n)}{n}} \leq p \leq 1-\frac{\omega(n)}{n^2}
\]
then $G_{n,p}$ has diameter $2$, and that for $3 \leq d \leq \log n / 4 \log \log n$, if
\[
p(n) \in \left ( (2\log n + \omega(n))^{\frac{1}{d}} n^{-\frac{d-1}{d}}, (2\log n - \omega(n))^{\frac{1}{d-1}} n^{-\frac{d-2}{d-1}} \right )
\]
then the random graph $G_{n,p(n)}$ has diameter $d$ with high probability. This answers the question about the time of $K_3$-bootstrap percolation.

\section{Adding an edge to the graph using sparse subgraphs}
\label{sec:FtProperties}

Throughout the following sections, fix $r \geq 4$.  As $r$ is fixed, for simplicity, it is often omitted from the notation.  We define a family $\{F_t: t \geq 1\}$ of graphs that add a given pair as an edge to the graph in the $K_r$-bootstrap process exactly at time $t$. We prove that these are the ``sparsest'' minimal such graphs (i.e., they minimise the ratio of the number of edges to the number of vertices). The main result in this section, presented in Section \ref{sec:overlappingCopies}, is a lower bound on the edge-density of two non-disjoint copies of the graph $F_t$. This bound is the key element of arguments for the proof of Statement (i) of Theorem \ref{thm:criticalProb}.

The graph $F_t$ is defined recursively and the fixed edge that is added to the graph at time $t$ using $F_t$ will always be denoted by $e_0 = \{1,2\}$.

For $t = 1$, set $F_1 = K_r - e_0$, an $r$-clique missing one edge. 

For each $t \geq 1$, given $F_t$, for each $e \in E(F_t)$, let $V(e)$ be a set of $r-2$ new vertices and let $K(e)$ be a copy of $K_r-e$, an $r$-clique missing one edge, on vertex set $V(e) \cup e$.  Then, $F_{t+1}$ is defined to be the graph with vertex set 
\[
V(F_{t+1}) = V(F_t) \cup \left( \bigcup_{e \in E(F_t)} V(e) \right)
\]
and edge set 
\[
E(F_{t+1}) = \bigcup_{e \in E(F_t)} E(K(e))
\]
(see Figure \ref{figure:constructionOfF_t}). Note that any edge $e \in E(F_{t+1})$ is incident to at least one vertex in $V(f)$ for some $f \in E(F_t)$, i.e., to a vertex in $V(F_{t+1}) \setminus V(F_t)$.

Recall that we define $\tau = \binom{r}{2}-1$ and the numbers $e_t$ and $v_t$ in equation \eqref{eq:initialNotation}.  By induction on $t$, it can be shown that for every $t \geq 1$, the number of edges and vertices in the graph $F_t$ are given by
\begin{align}
\label{eq:e_t}
 e_t & = e(F_t) = |E(F_t)| = \tau^t \text{ and}\\
\label{eq:v_t}
 v_t & = v(F_t) = |V(F_t)| = |V(F_{t-1})| + e_{t-1} (r-2) = 2 + (r-2) \frac{\tau^t - 1}{\tau -1}. 
\end{align}

\begin{figure}[htb] \centering
  \begin{tikzpicture}
    \tikzstyle{vertex}=[draw,shape=circle,minimum size=5pt,inner sep=0pt]

    \foreach \name/\x/\y in {1/0/0, 2/0/1, 3/1/1, 4/1/0}
      \node[vertex] (P-\name) at (\x,\y) {~};
    \draw (0,0.5) node [left] {$e_0$};
    \foreach \from/\to in {1/3, 1/4, 2/3, 2/4, 3/4}
      {\draw (P-\from) -- (P-\to);}
    \foreach \from/\to in {1/2}
      {\draw[dashed] (P-\from) -- (P-\to);}
    \draw[black] (0.5,-2) node {$F_1$};
      
    \begin{scope}[xshift=2.25cm]

    \foreach \name/\x/\y in {1/0/0, 2/0/1, 3/1/1, 4/1/0, 231/0/2, 232/0.85/2, 241/1.4/1.9, 242/1.9/1.4, 341/2/0.9, 342/2/0.1, 131/1.4/-0.9, 132/1.9/-0.4, 141/0/-1, 142/0.85/-1}
      \node[vertex] (P-\name) at (\x,\y) {~};
    \draw (0,0.5) node [left] {$e_0$};
    \foreach \from/\to in {1/131, 1/132, 1/141, 1/142, 2/231, 2/232, 2/241, 2/242, 3/131, 3/132, 3/231, 3/232, 3/341, 3/342, 4/141, 4/142, 4/241, 4/242, 4/341, 4/342, 131/132, 141/142, 231/232, 241/242, 341/342}
      {\draw (P-\from) -- (P-\to);}
    \foreach \from/\to in {1/2}
      {\draw[dashed] (P-\from) -- (P-\to);}
    \draw[black] (1,-2) node {$F_2$};
    
    \end{scope}
    
    \draw (5,0.5) node {$\cdots$};
    
    \begin{scope}[xshift=6cm]

    \draw (4.25,0.5) ellipse (4.5cm and 1.75cm);
    \foreach \name/\x/\y in {1/0.25/0, 2/0.25/1, 3/2.5/1.25, 4/3.5/-0.7, 5/5.2/1.5, 6/5.3/0.5, 561/6.55/1.7, 562/6.75/1.1, 461/6.55/-0.7, 462/6.75/-0.1}
      \node[vertex] (P-\name) at (\x,\y) {~};
    \foreach \from/\to in {1/2}
      {\draw[dashed] (P-\from) -- (P-\to);}
    \draw (0.25,0.5) node [left] {$e_0$};
    \draw[black] (4.25,-2) node {$F_{t}$};
    \foreach \x/\y in {1.25/0.35, 1.25/0.1, 1.25/-0.1, 1.25/-0.3}
      \draw (P-1) -- (\x,\y);
    \draw (1.25,0) node [right] {$\Bigg\} (r-2)(r-1)^{t-1}$};
    \draw [dashed] (1.75,-0.955) arc (323.96:345:2.473);
    \draw [dashed] (1.75,1.955) arc (36.04:-5:2.473);
    \foreach \x/\y in {3.5/1.5, 3.5/1.25, 3.5/1}
      \draw (P-3) -- (\x,\y);
    \foreach \x/\y in {4.5/-0.25}
      \draw (P-4) -- (\x,\y);
    \draw [dashed] (3.75,-1.239) arc (-23.5:23.5:4.362);
    \draw (4.5,0.5) node {$\cdots$};
    \draw [dashed] (4.5,-1.239) arc (-23.5:23.5:4.362);
    \draw (5.5,-1.181) arc (-21.94:21.94:4.5);
    \draw[black] (7.25,0.5) node {$V(F_{t}) \setminus V(F_{t-1})$};
    \foreach \x/\y in {5/561, 5/562, 6/561, 6/562, 561/562}
      \draw (P-\x) -- (P-\y);
    \foreach \x/\y in {4/461, 4/462, 6/461, 6/462, 461/462}
      \draw (P-\x) -- (P-\y);
    \end{scope}
    
  \end{tikzpicture}
  \caption{Construction of the graph $F_t$. Note that every edge in $F_{t}$ is incident to at least one vertex in $V(F_{t}) \setminus V(F_{t-1})$.}
  \label{figure:constructionOfF_t}
\end{figure}
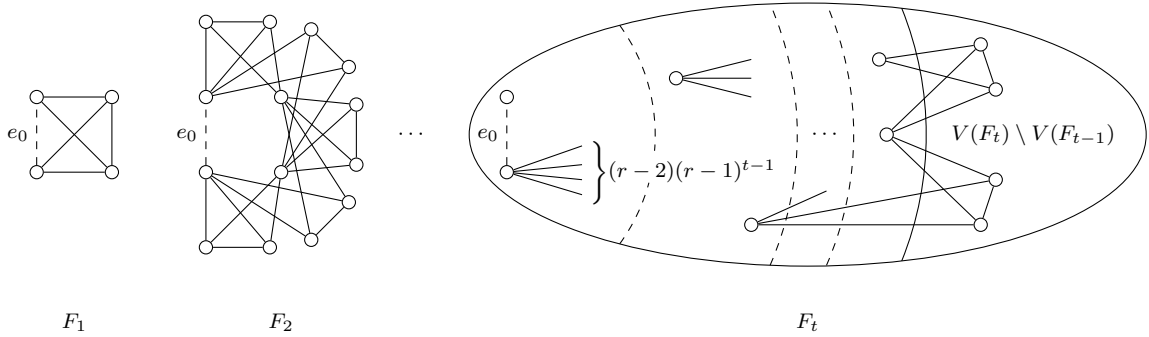

\begin{lemma}
\label{lem:FtInfects}
In the $K_r$-bootstrap process started from $F_t$ the edge $e_0$ is added to the graph in exactly $t$ steps.
\end{lemma}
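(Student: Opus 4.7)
My plan is to prove the lemma by induction on $t$. The base case $t = 1$ is immediate, since $F_1 = K_r - e_0$ does not contain $e_0$, and one bootstrap step adds $e_0$ because $F_1 \cup \{e_0\} = K_r$. For the inductive step, let $G^{(t)}_s$ denote the graph at time $s$ of the $K_r$-bootstrap process started from $F_t$ (so $G^{(t)}_0 = F_t$). The engine of the argument is the following shifted coupling identity, which I will establish for all $s \geq 1$:
\[
G^{(t+1)}_s \cap \binom{V(F_t)}{2} = G^{(t)}_{s-1}.
\]
Combined with the inductive hypothesis on $t$, this immediately yields $e_0 \in G^{(t+1)}_s$ if and only if $s \geq t + 1$.

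The first step is to analyse a single bootstrap step on $F_{t+1}$. Each edge of $F_{t+1}$ has at least one endpoint outside $V(F_t)$, the graphs $K(e)$ are pairwise edge-disjoint, and every new vertex $w \in V(e) \setminus V(F_t)$ has all of its $F_{t+1}$-neighbours inside the single set $V(K(e))$. A short case analysis then shows that every $K_{r-2}$-clique of $F_{t+1}$ lies inside some $V(K(e))$; hence, if a non-edge $uv$ is added at time $1$ with common-neighbour set $S$, then $\{u,v\} \cup S$ must coincide with $V(K(e))$ for some $e \in E(F_t)$, and the unique non-edge of the induced $K_r - uv$ on $V(K(e))$ must equal $e$ itself. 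Since every $e \in E(F_t)$ is indeed added (as $K(e) \cup e = K_r$), this supplies the $s = 1$ case of the coupling: $G^{(t+1)}_1 \cap \binom{V(F_t)}{2} = E(F_t) = G^{(t)}_0$.

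For the inductive step in $s$, I will prove a frozen-neighbourhood lemma: for every $s \geq 0$ and every $w \in V(F_{t+1}) \setminus V(F_t)$,
\[
N_{G^{(t+1)}_s}(w) = V(K(e_w)) \setminus \{w\},
\]
where $e_w$ is the unique $e \in E(F_t)$ with $w \in V(e)$. The induction on $s$ runs as follows: if $w$ were to acquire a new neighbour $y \notin V(K(e_w))$ at time $s+1$, then the required clique $S \subseteq V(K(e_w)) \setminus \{w\}$ of size $r - 2$ would have to contain some $v' \in V(e_w) \setminus \{w\}$ (nonempty because $r \geq 4$), and the inductive hypothesis applied to $v'$ would force $y \in V(K(e_w))$, a contradiction. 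Granted this lemma, the $\subseteq$ direction of the coupling follows: if $uv$ is newly added in $G^{(t+1)}_{s+1}$ with $u,v \in V(F_t)$ and $s \geq 1$, then any $w \in S \setminus V(F_t)$ would force $\{u,v\} \cup S = V(K(e_w))$, hence $uv = e_w \in E(F_t) \subseteq G^{(t+1)}_1$, contradicting the newness of $uv$. Thus $S \subseteq V(F_t)$, and applying the inductive hypothesis on $s$ to the edges of $K_r - uv$ shows $uv \in G^{(t)}_s$. The reverse inclusion $G^{(t)}_{s-1} \subseteq G^{(t+1)}_s$ is straightforward by monotonicity, replaying each bootstrap step on $F_t$ inside $G^{(t+1)}$. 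The main obstacle is the frozen-neighbourhood lemma, which captures precisely why the newly introduced level-$(t+1)$ vertices cannot shortcut the process beyond their originally assigned $K(e)$.
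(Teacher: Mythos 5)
Your proposal contains a genuine gap in the case $r = 4$, located in the proof of the ``frozen-neighbourhood lemma''. You argue that if $w$ gains a neighbour $y \notin V(K(e_w))$ at time $s+1$, then the $(r-2)$-set $S \subseteq V(K(e_w)) \setminus \{w\}$ ``would have to contain some $v' \in V(e_w) \setminus \{w\}$''. But $|V(K(e_w)) \setminus \{w\}| = r-1$ and $|S| = r-2$, so $S$ misses exactly one vertex of $V(K(e_w)) \setminus \{w\}$; since $|V(e_w) \setminus \{w\}| = r-3$, when $r = 4$ that missed vertex can be the unique element of $V(e_w)\setminus\{w\}$, leaving $S$ equal to the two endpoints of $e_w$. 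The resulting statement is not just unproven, it is false: in $F_2$ with $r=4$, take $e_w = \{1,3\}$ and $w \in V(e_w)$. At time $1$ all edges of $F_1 = K_4 - e_0$ are restored, so $\{1,3\},\{1,4\},\{3,4\}$ are present, and $\{w,1,3,4\}$ is $K_4$ minus $\{w,4\}$; hence $\{w,4\}$ is added at time $2$ even though $4 \notin V(K(e_w))$. More generally, for $r = 4$, whenever $e_w$ lies in a triangle of $F_t$ together with some $y \in V(F_t)$, the vertex $w$ acquires $y$ as a neighbour at time $2$. Your inductive chain ``coupling at $s+1$ from frozen neighbourhood at $s$'' needs the frozen-neighbourhood lemma at $s = t-1$, so the argument breaks down for $r = 4$ and all $t \geq 3$.

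The coupling identity itself may well survive (in the example above the spurious edge $\{y_1, y_2\}$ that $w$ could help create already appears at time $1$ in the $F_t$-process, so no shortcut arises), but a different invariant is needed to prove it. The paper avoids this pitfall by working with the dual decomposition: it views $F_{t+1}$ as $\tau$ copies $F^1,\dots,F^\tau$ of $F_t$ attached to the edges of $F_1$, pairwise intersecting in at most one vertex, and proves by induction on $s \leq t$ that the edge set at time $s$ is exactly the disjoint union $\bigcup_i E_{i,s}$ of the independently-evolving processes. The crux there is only that every triangle present at time $s$ must lie inside one copy $F^{i_0}$; a witness $K_r$ for a crossing edge $\{w_1,w_2\}$ provides two triangles $\{w_1,w_i,w_j\}$, $\{w_2,w_i,w_j\}$ sharing two vertices, forcing the whole witness into one copy. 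That triangle-containment invariant is robust for all $r \geq 4$, whereas your stronger neighbourhood-freezing claim is false for $r = 4$. To salvage your route you would need either to weaken the frozen-neighbourhood lemma to a precise description of the extra neighbours $w$ can pick up and show they cannot shortcut the coupling, or to switch to the triangle-containment invariant.
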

\begin{proof}
 We prove this lemma by induction on $t$. The statement is trivial for $t=1$ as $F_1 = K_r - e_0$. Assume that the Lemma holds for $t=k \geq 1$. Note that after one step of the process started from $F_{k+1}$ we obtain a copy of $F_{k}$ in our graph since $F_{k+1}$ is obtained from $F_{k}$ by placing a copy of $K_r$ minus an edge on every edge of $F_k$. Thus $e_0$ is added to the graph after at most $k+1$ steps of the process started from $F_{k+1}$.

The construction of $F_{k+1}$ can be also seen as placing a copy of $F_k$ on each of the $\tau$ edges of $F_1 = K_r - e_0$. By induction we know that these copies of $F_k$ on their own add the respective edges of $F_1$, their \textit{anchor} edges, in $k$ time steps. This process could possibly accelerate if some interaction between two different copies of $F_k$ occurred early in the process, say, before time $k$ when the $F_k$'s on their own add their respective anchor edges. Therefore, let $F^1, F^2, \ldots, F^{\tau}$ be these different copies of $F_k$ in $F_{k+1}$. By construction of $F_{k+1}$ we have that for all $i \neq j$:
 \begin{enumerate}
  \item $F^i$ and $F^j$ share at most one vertex,
  \item no vertex of $F^i$ other than the vertices in its anchor edge can have a neighbour outside $F^i$.
 \end{enumerate}
 
 For every $s \in \{0,1,\ldots, k\}$, and $i \in \{1,2, \ldots,\tau\}$, let $E_{i,s}$ be the edges within $V(F^{i})$ in the $s$-th step of the $K_r$-bootstrap process on the subgraph induced by $F^i$ and let $E_s$ be the edges in $F_{k+1}$ in the $s$-th step of the $K_r$ bootstrap process.  The aim is to show that for $s \leq k$, there is no interaction between the processes in each $F^i$ so that $E_s = \bigcup_{i = 1}^{\tau} E_{i, s}$.  This is proved by induction on $s$.  Note that for $s \leq k-1$, if it is true that $E_s = \bigcup_{i = 1}^{\tau} E_{i, s}$, then any triangle in the graph at that time is contained entirely within the vertex set of one of the $F^i$.  Also, recall that $E_{i, s}$ contains only edges within $V(F^i)$.
 
 For $s = 0$, the statement $E_0 = \bigcup_{i = 1}^{\tau} E_{i, 0}$ is true by construction.
 
 Suppose that for some $s \in [0, k-1]$, that $E_s = \bigcup_{i = 1}^{\tau} E_{i, s}$ and that there is an edge $\{w_1, w_2\} \in E_{s+1} \setminus \bigcup_{i = 1}^{\tau} E_{i, s+1}$.  As the edge $\{w_1, w_2\}$ must have been added at time $s+1$, let $w_3, \ldots, w_r$ be the $r-2$ other vertices in the copy of $K_r$ minus an edge that witness $\{w_1, w_2\}$ being added at time $s+1$.  Then, at time $s$, for any $i, j > 2$, the vertices $w_1, w_i, w_j$ form a triangle, as do the vertices $w_2, w_i, w_j$.  Then, by the induction hypothesis, there is an $i_0$ so that $w_1, w_2, w_i, w_j \in V(F^{i_0})$.  Again, by hypothesis, this means that the edges $\{w_1, w_i\}, \{w_2, w_i\},  \{w_1, w_j\}, \{w_2, w_j\}, \{w_i, w_j\}$ are all contained in $E_{i_0, s}$.  This is a contradiction, as then the edge $\{w_1, w_2\}$ would have been added within the graph $F^{i_0}$ at time $s+1$.  This completes the induction on $s$.

Thus the first $k$ steps of the process started from $F_{k+1}$ look like $\tau$ independent processes started from $F_k$'s. Hence $e_0$ is not added to the graph by time $k$. This completes the proof of the lemma.
\end{proof}

Recall that we denote
\begin{equation}\label{eq:lambda}
 \lambda = \frac{\binom{r}{2}-2}{r-2} = \frac{r+1}{2} - \frac{1}{r-2}
\end{equation}
and that $\tau$ can be written as
\begin{equation}\label{eq:tau}
 \tau = \binom{r}{2} - 1 = \frac{(r+1)(r-2)}{2}.
\end{equation}
Note that \eqref{eq:lambda} and \eqref{eq:tau} imply that
\begin{equation}\label{eq:tauAndLambda}
 (r-2) \lambda = \tau -1.
\end{equation}
Define for convenience,
\begin{equation}\label{eq:c_t}
c_t = \frac{1}{\tau^t-1},
\end{equation}
which is used to simplify certain expressions involving $\frac{e_t}{v_t-2}$ as in \eqref{eq:crucialRatio} below.
By \eqref{eq:v_t} and \eqref{eq:tauAndLambda} we have
\begin{equation} \label{eq:v_t-2}
 v_t -2 = (r-2) \frac{\tau^t - 1}{\tau -1} = \frac{\tau^t - 1}{\lambda}.
\end{equation}
Hence, using \eqref{eq:e_t} and the above relations,
\begin{equation}
\label{eq:crucialRatio}
  \frac{e_t}{v_t-2} = \frac{\tau^t}{\frac{\tau^t-1}{\lambda}} = \lambda \left ( 1 + c_t \right ) = \lambda + \frac{\tau-1}{r-2} \frac{1}{\tau^t-1} = \lambda + \frac{1}{v_t-2}.
\end{equation}
Equation \eqref{eq:crucialRatio} is used throughout this section to show that $F_t$ is the sparsest minimal graph that adds $e_0$ to the graph in $t$ time steps of the $K_r$-bootstrap process.

Let us recall the following Witness-Set Algorithm introduced in \cite{graphBootstrap}. Given a graph $G$, we assign a subgraph $F = F(e) \subseteq G$ to each edge $e \in \closure{G}{K_r}$ as follows:
\begin{enumerate}
 \item If $e \in G$ then set $F(e) = \{e\}$.
 \item Choose an order in which to add the edges of $\closure{G}{K_r}$, and at each step identify which $r$-clique was completed (if more than one is completed then choose one).
 \item Add the edges one by one. If $e$ is added by the $r$-clique $K$, then set
\[
 F(e) := \bigcup_{e \neq e' \in K} F(e').
\]
\end{enumerate}
A graph $F$ is an \textit{$r$-witness set} if there exists a graph $G$, an edge $e$, and a realization of the Witness-Set Algorithm (i.e., a sequence of choices as in Step 2) such that $F = F(e)$. The following highly nontrivial extremal result occurs as Lemma 9 in \cite{graphBootstrap}, which is stated here without repeating the proof.
\begin{lemma}
 \label{lem:witnessManyEdges}
Let $F$ be a graph and $r \geq 4$, and suppose that $F$ is an $r$-witness set. Then,
\[
 |E(F)| \geq \lambda (|V(F)|-2) + 1.
\]
\end{lemma}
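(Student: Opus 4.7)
My plan is strong induction on the depth of the Witness-Set Algorithm that produces $F$. The base case $F=\{e\}$ gives $v(F)=2$, $e(F)=1$, and the inequality $1\geq\lambda\cdot 0+1$ holds with equality. For the inductive step one writes $F = \bigcup_{i=1}^{\tau} F_i$, where $F_i:=F(e'_i)$ is the witness set associated to the $i$-th remaining edge of the $K_r$ that completes $e$ in the algorithm; each $F_i$ satisfies the desired bound by the inductive hypothesis.

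The naive bound $\sum_i e(F_i)\geq e(F)$ goes the wrong way, so a more delicate argument is required. I would follow the linear-algebraic approach underlying Kalai's proof of the Alon--Frankl--Kalai weak-saturation theorem. The idea is to construct a map $\phi$ from the unordered pairs of $V(F)$ into a vector space $W$ satisfying two properties: (a) for every $r$-subset of $V(F)$, the $\binom{r}{2}$ pair-vectors satisfy exactly one nontrivial linear relation, with every coefficient nonzero; and (b) the image of $\phi$ on all pairs of $V(F)$ spans a subspace of dimension exactly $\lambda(v(F)-2)+1$. The standard construction assigns generic vectors $\vec x\in\mathbb R^{r-2}$ to the vertices $x\in V(F)\setminus\{1,2\}$ (with the two endpoints of $e$ treated canonically) and defines $\phi(\{x,y\})$ as a specific element of the exterior algebra built from these vectors.

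Given $\phi$, property (a) encodes the $K_r$-bootstrap rule algebraically: whenever the algorithm adds an edge by completing a $K_r$, the pair-vector of that edge already lies in the span of the other $\tau$ pair-vectors of the clique. Iterating along the Witness-Set Algorithm then yields $\phi(e)\in\operatorname{span}\phi(F)$. Combined with (b) and a genericity argument showing that $\phi(e)$ cannot lie in the span of strictly fewer than $\lambda(v(F)-2)+1$ pair-vectors, one deduces $|E(F)|\geq\lambda(v(F)-2)+1$.

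The crux of the proof, and the principal obstacle, is the rank calculation in property (b): verifying that the image of $\phi$ has exactly the target dimension and does not collapse due to accidental linear dependencies. This requires careful bookkeeping in the exterior algebra together with a transcendence argument to rule out coincidences among the generic vectors. A purely combinatorial alternative would exploit the modular identity $\mu(A\cup B)+\mu(A\cap B)=\mu(A)+\mu(B)$ for the potential $\mu(H):=e(H)-\lambda(v(H)-2)-1$, reducing the problem to controlling $\mu$ on the intersections $F_i\cap F_j$; but this route requires a nested induction on the overlap patterns that appears no simpler than the algebraic approach, which is presumably why the authors invoke the result from the literature rather than repeating the proof.
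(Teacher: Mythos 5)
The paper does not actually prove this lemma: immediately before its statement the authors write that it ``occurs as Lemma 9 in \cite{graphBootstrap}, which is stated here without repeating the proof.'' There is therefore no internal proof in this paper to compare against; the statement is imported from Balogh, Bollob\'as and Morris. Your closing remark---that the authors ``invoke the result from the literature rather than repeating the proof''---correctly anticipates this.

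As a proof attempt in its own right, your sketch identifies the right family of techniques (a Kalai-style exterior-algebra argument, assigning generic vectors to vertices and exploiting the unique full-support linear relation among the $\binom{r}{2}$ pair-vectors of any $r$-set), and your opening decomposition $F=\bigcup_i F(e'_i)$ correctly sets up the recursion from the Witness-Set Algorithm. But the core is left unestablished, and there are two concrete gaps. First, property~(b)---that the relevant image of $\phi$ has dimension exactly $\lambda(v(F)-2)+1$---is stated as a target but never verified, and it does not follow verbatim from the Alon--Frankl--Kalai construction: the rank in the weak-saturation theorem is $\binom{n}{2}-\binom{n-r+2}{2}=(r-2)n-\binom{r-1}{2}$, whereas here one needs $\lambda(n-2)+1$, and these agree only for $r=4$ (for $r\geq 5$ they differ, so the construction must be tailored and the genericity argument redone). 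Second, even granting (a) and~(b), the deduction does not close as written: (a) together with iteration of the algorithm gives $\phi(e)\in\operatorname{span}\phi(E(F))$, but the bound $|E(F)|\geq\lambda(v(F)-2)+1$ needs a lower bound on $\dim\operatorname{span}\phi(E(F))$, which is not the same as the dimension of the span of $\phi$ over \emph{all} pairs of $V(F)$; bridging this requires an additional structural fact about witness sets (for instance, that the closure of $F$ under the process fills in every pair of $V(F)$, or a more refined rank statement on $E(F)$ alone). Your combinatorial alternative via the modular identity for $\mu$ is likewise only a pointer: the identity holds pairwise, but controlling $\mu$ on the lattice of intersections of $\tau$ overlapping witness sets is essentially the entire difficulty, and you do not carry it out. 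In short, the proposal is a sensible survey of what a proof would have to do, but the steps you flag as obstacles are precisely the missing content.
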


We say that a graph $G$ is a \textit{minimal graph adding $e$} if $e \in \closure{G}{K_r}$ but for all proper subgraphs $G' \subsetneq G$ of $G$ we have $e \notin \closure{G'}{K_r}$. It's an immediate observation that every minimal graph adding $e$ to $G$ is an $r$-witness set. Hence we have the following corollary.
\begin{corollary}
 \label{cor:manyEdges}
Let $r \geq 4$ and let $F$ be a minimal graph adding $e$ to the graph for some $e \in \closure{F}{K_r}$. Then
\begin{equation}
\label{eq:manyEdges}
 |E(F)| \geq \lambda (|V(F)|-2) + 1.
\end{equation}
\end{corollary}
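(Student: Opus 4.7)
The plan is to derive the corollary as an immediate consequence of Lemma \ref{lem:witnessManyEdges}, via the observation (stated without proof in the text) that every minimal graph adding an edge $e$ is itself an $r$-witness set. Once this observation is established, applying Lemma \ref{lem:witnessManyEdges} to $F$ yields the claimed inequality directly.

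To establish that $F$ is an $r$-witness set, I would run the Witness-Set Algorithm with input graph $G = F$ and target edge $e$, choosing an arbitrary order in which the edges of $\closure{F}{K_r}$ are added and an arbitrary completing $r$-clique at each step. This produces a subgraph $F(e) \subseteq F$. The key verification is that $e \in \closure{F(e)}{K_r}$. I would prove this by induction on the step at which an edge $e'$ is added: for $e' \in E(F)$ the claim is trivial since $F(e') = \{e'\}$, and if $e'$ is completed by an $r$-clique $K$ at a later step, then $F(e') = \bigcup_{e'' \in K \setminus \{e'\}} F(e'')$, so by induction each edge of $K \setminus \{e'\}$ lies in $\closure{F(e')}{K_r}$, whence $K$ forces $e'$ in the $K_r$-bootstrap process from $F(e')$.

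Applying this to the target edge $e$ gives $e \in \closure{F(e)}{K_r}$ with $F(e) \subseteq F$. By the minimality hypothesis on $F$, this forces $F(e) = F$, so $F$ itself is an $r$-witness set. Lemma \ref{lem:witnessManyEdges} then gives $|E(F)| \geq \lambda(|V(F)| - 2) + 1$, completing the proof.

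There is no real obstacle here: the argument is essentially a definition-chase. The only point requiring a small amount of care is the inductive verification that the subgraph produced by the Witness-Set Algorithm actually has $e$ in its $K_r$-closure; minimality of $F$ then does the rest.
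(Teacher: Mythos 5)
Your proposal is correct and takes exactly the same route as the paper: the paper states as an ``immediate observation'' that every minimal graph adding $e$ is an $r$-witness set and then invokes Lemma \ref{lem:witnessManyEdges}, and your argument simply fills in the inductive verification of that observation via the Witness-Set Algorithm applied to $G = F$ together with the minimality of $F$. The two proofs coincide in substance.
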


We now show that $F_t$ maximises the number of vertices among all minimal graphs that add $e_0$ to the graph in exactly $t$ time steps of the $K_r$-bootstrap process.
\begin{lemma}
 \label{lem:minimalsSmall}
 Let $r \geq 4$, $t \geq 1$ and let $F$ be a minimal graph adding $e_0$ to the graph at time $t$ in the $K_r$-bootstrap process. Then $|V(F)| \leq v_t = (r-2) \frac{\tau^t-1}{\tau-1}+2$ and $|E(F)| \leq e_t = \tau^t$.
\end{lemma}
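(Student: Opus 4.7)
The plan is to prove the lemma by induction on $t$, strengthening the statement slightly so that the induction chains cleanly. The base case $t=1$ is immediate: if $F$ adds $e_0$ in one step then $F$ must contain a copy of $K_r - e_0$ anchored at $e_0$, and minimality forces $F = K_r - e_0$, giving $|V(F)| = r = v_1$ and $|E(F)| = \tau = e_1$.

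For the inductive step I would first establish the following auxiliary claim by strong induction on $s \geq 0$: for any graph $G$ and any edge $e$ added to the graph at time $s$ in the $K_r$-bootstrap process on $G$, there is a subgraph $H \subseteq G$ that adds $e$ in at most $s$ steps and satisfies $|V(H)| \leq v_s$ and $|E(H)| \leq e_s$. (For $s=0$, interpret the formulas in \eqref{eq:initialNotation} as $v_0 = 2$ and $e_0 = 1$, and take $H = \{e\}$.) Once this is established, applying it to a minimal graph $F$ adding $e_0$ at time $t$ produces a subgraph $H \subseteq F$ that already adds $e_0$, and the minimality of $F$ forces $H = F$, immediately yielding both bounds of the lemma.

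For the inductive step $s \geq 1$ of the auxiliary claim, fix an $r$-clique on $\{a,b,u_1,\ldots,u_{r-2}\}$ with $e = \{a,b\}$ that witnesses $e$ being added at time $s$. Each of the $\tau$ other edges $f$ of this clique is added in $G$ at some time $s(f) \leq s-1$, so the inductive hypothesis supplies a subgraph $H_f \subseteq G$ adding $f$ in at most $s(f)$ steps with $|V(H_f)| \leq v_{s-1}$ and $|E(H_f)| \leq e_{s-1}$. Set $H = \bigcup_f H_f$. In the bootstrap process on $H$, each $f$ appears by time $s-1$, so the complete $K_r - e$ is present at time $s-1$ and $e$ is added by time $s$. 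The edge bound is immediate: $|E(H)| \leq \tau \cdot e_{s-1} = e_s$. For the vertex bound, since each $H_f$ contains the two endpoints of its anchor edge $f \subseteq \{a,b,u_1,\ldots,u_{r-2}\}$, one obtains $|V(H)| \leq r + \tau(v_{s-1} - 2)$, and a short computation using the formula for $v_s$ in \eqref{eq:v_t} shows this equals $v_s$.

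The delicate step is the vertex counting: a naive bound $|V(H)| \leq \sum_f |V(H_f)|$ would blow up the recursion, but the correct accounting peels off the $r$ anchor vertices of the witnessing clique once and then bounds each $H_f$'s remaining contribution by $|V(H_f)| - 2$. The identity $v_s = r + \tau(v_{s-1} - 2)$ is precisely what makes the recursive construction of $F_t$ extremal for this lemma, so it is reassuring that it emerges naturally as the arithmetic backbone of the bound.
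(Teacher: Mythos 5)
Your proof is correct, and it takes a genuinely different decomposition from the paper's. The paper peels off the \emph{first} step of the bootstrap process: from a minimal $F$ adding $e_0$ at time $t+1$, it runs one step to get $F'$, extracts a minimal $F'' \subseteq F'$ adding $e_0$ in $t$ more steps, applies the inductive hypothesis to $F''$, and then bounds $|E(F)| \leq \tau |E(F'')|$ and $|V(F)| \leq |V(F'')| + (r-2)|E(F'')|$ by observing that each edge of $F''$ is produced by at most one witnessing clique of $\tau$ edges and $r-2$ fresh vertices; this matches the recurrence $v_{t+1} = v_t + (r-2)e_t$, $e_{t+1} = \tau e_t$, i.e.\ the ``hang a clique on each edge of $F_t$'' view of $F_{t+1}$. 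You instead peel off the \emph{last} step: take the witnessing clique $K_r - e_0$ on $\{a,b,u_1,\dots,u_{r-2}\}$ and for each of its $\tau$ edges $f$, added at some time $s(f) \leq s-1$, strong-inductively find a small witness $H_f$, then union them. That matches the dual recurrence $v_s = r + \tau(v_{s-1}-2)$, $e_s = \tau e_{s-1}$, i.e.\ the alternative ``$\tau$ copies of $F_{s-1}$ glued to a central $K_r - e_0$'' view, which the paper itself mentions in the proof of Lemma~\ref{lem:FtInfects} but does not use here. Your version is somewhat more explicit: the paper's phrase ``to maximise the number of vertices and edges in $F$ we should \ldots\ use disjoint copies'' is an extremal hand-wave that your explicit union $H = \bigcup_f H_f$ plus the ``each $H_f$ contributes at most $|V(H_f)| - 2$ new vertices'' accounting makes precise, and your auxiliary claim (valid for any $G$ and any edge added by time $s$, with the minimality of $F$ then forcing $H = F$) cleanly sidesteps the question, lurking in the paper's proof, of whether $F''$ adds $e_0$ at time exactly $t$ or possibly sooner (either treatment requires noting that $v_s, e_s$ are monotone in $s$, which you use implicitly when writing $v_{s(f)} \leq v_{s-1}$).
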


\begin{proof}
 We prove the lemma by induction on $t$. For $t=1$ the lemma is trivial as $K_r - e_0$ is the only minimal graph adding $e_0$ at the first time step. Hence assume that the lemma holds for some $t \geq 1$ and consider a minimal graph $F$ such that $e_{0}$ is added at time $t+1$ in the $K_r$-bootstrap process started from $F$.
 
 After one step of the process we obtain a graph $F'$ containing some minimal subgraph $F''$ that adds $e_{0}$ in $t$ additional time steps. By induction we have $|V(F'')| \leq (r-2) \frac{\tau^t-1}{\tau-1}+2$ and $|E(F'')| \leq \tau^t$. Now, since $F$ was a minimal graph adding $e_{0}$ in time $t+1$, to maximise the number of vertices and edges in $F$ we should in the first step of the process add every edge $e$ of $F''$ using a copy of $K_r - e$ disjoint from the copies adding other edges in $F''$. This shows that $|E(F)| \leq \tau |E(F'')|$ and $|V(F)| \leq |V(F'')| + (r-2) |E(F'')|$. This completes the induction and the lemma follows.
 \end{proof}
 
 A proof closely following that of Lemma \ref{lem:minimalsSmall} immediately shows a further extremal result.

\begin{corollary}
 \label{cor:largestIsomorphic}
For any $t \geq 1$, up to isomorphism, $F_t$ is the only minimal graph on $v_t$ vertices adding $e_{0}$ to the graph in exactly $t$ time steps.
\end{corollary}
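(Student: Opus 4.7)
The plan is to run the same induction as in the proof of Lemma \ref{lem:minimalsSmall}, but to track the equality case and invoke the inductive hypothesis of the present corollary to identify the resulting smaller graph up to isomorphism. The base case $t=1$ is immediate: any graph in which $e_0$ is added at the first step must contain $K_r - e_0$ as a subgraph, and closure-minimality forces $F = K_r - e_0 = F_1$, which has $v_1 = r$ vertices.

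For the inductive step, suppose the statement holds for some $t \geq 1$ and let $F$ be a minimal graph on exactly $v_{t+1}$ vertices that adds $e_0$ at time $t+1$. One step of the process from $F$ produces a graph $F'$ containing a minimal subgraph $F'' \subseteq F'$ that adds $e_0$ in $t$ further steps, and by Lemma \ref{lem:minimalsSmall} we have $|V(F'')| \leq v_t$ and $|E(F'')| \leq \tau^t$. Each edge $e \in E(F'')$ is either already present in $F$ or was completed at step $1$ by a copy of $K_r - e$ inside $F$ contributing at most $r - 2$ new vertices outside $V(F'')$, giving the chain
\[
v_{t+1} = |V(F)| \leq |V(F'')| + (r-2)\,|E(F'')| \leq v_t + (r-2)\tau^t = v_{t+1}.
\]
All inequalities must therefore be equalities, so $|V(F'')| = v_t$ and $|E(F'')| = \tau^t$, and the inductive hypothesis of the corollary applied to $F''$ yields $F'' \cong F_t$.

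Equality in the vertex bound also forces a rigid geometric structure: every edge $e \in E(F'')$ must be added at step $1$ by a complete copy of $K_r - e$ whose $r - 2$ auxiliary vertices lie in $V(F) \setminus V(F'')$ and are pairwise disjoint from the auxiliary vertices of the copies attached to the other edges of $F''$, and no edge of $F''$ is already present in $F$. This is precisely the recursive rule that builds $F_{t+1}$ from $F_t$. The closure-minimality of $F$ then rules out any further edges, since the structure just described already places $e_0$ in $\closure{F}{K_r}$; hence $F \cong F_{t+1}$, completing the induction.

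The main delicate point is extracting the full disjointness rigidity from the single cardinality equality $|V(F)| = |V(F'')| + (r-2)|E(F'')|$: any reuse of an auxiliary vertex across two different $K_r - e$ copies, any borrowing of a non-endpoint vertex of $F''$ by a $K_r - e$ copy, or any edge of $F''$ already lying in $F$, each costs at least one unit of slack in the count and is therefore incompatible with the equality. Once this rigidity is in place, the isomorphism with $F_{t+1}$ is forced directly by the recursive definition.
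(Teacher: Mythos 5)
Your proof is correct and takes essentially the same route as the paper: the paper's justification is the single sentence that the corollary follows by ``closely following'' the proof of Lemma~\ref{lem:minimalsSmall}, and you have carried out precisely that plan, running the same induction, observing that every inequality in the chain $|V(F)| \leq |V(F'')| + (r-2)|E(F'')| \leq v_t + (r-2)\tau^t$ must be tight, and extracting from the rigidity of equality the disjointness of witness copies that forces $F \cong F_{t+1}$.
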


As usual, for any graph $G$ and $A,B \subseteq V (G)$, let $E(A,B) =  \left \{ \{a,b\} \in E(G) : a \in A, b \in B \right \} $ and $e(A, B) = |E(A, B)|$. We shall often take $B = V(G)$ and, to simplify the notation, we shall write $e(A, G)$ to denote $e(A, V(G))$. Note that $e(A, G)$ is the number of edges of $G$ with at least one endpoint in $A$.

In the proofs to come, results on edge-densities of subsets of the graphs $\{F_t : t \geq 1\}$ are proved by induction on $t$.  To make the notation clearer, let us use $E_t(A, B)$ to denote the edges between $A$ and $B$ in the graph $F_t$ and $e_t(A, B) = |E_t(A, B)|$.  As usual, $\delta(G) = \min \{\deg_G(v) : v \in V(G) \}$ is used for the minimum degree of $G$. We shall find the following simple estimate useful in our examination of $F_t$.
\begin{lemma}
 \label{lem:degree}
 For any $t \geq 2$ and any set $L \subseteq V(F_t)$
 \[
  e_t(L, F_t) \geq \frac{r-1}{2} |L|.
 \]
\end{lemma}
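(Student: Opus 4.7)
The plan is to reduce the claim to a bound on the minimum degree of $F_t$ and then apply a handshake-style double-counting. Since $e_t(L, F_t)$ is the number of edges of $F_t$ with at least one endpoint in $L$, one has
\[
2\, e_t(L,F_t) \;\geq\; \sum_{v \in L} \deg_{F_t}(v) \;\geq\; \delta(F_t)\,|L|,
\]
so it suffices to prove that $\delta(F_t) \geq r-1$ whenever $t \geq 2$.

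To establish the minimum-degree bound, I would argue by induction on $t$, exploiting the recursive construction of $\{F_s\}_{s \geq 1}$. For the inductive step from $t$ to $t+1$, a vertex $v \in V(F_{t+1}) \setminus V(F_t)$ belongs to $V(e)$ for some edge $e \in E(F_t)$ and lies in the corresponding copy $K(e) \cong K_r - e$ on vertex set $V(e) \cup e$. Since $v$ is not an endpoint of the missing edge, it is adjacent in $K(e)$ to each of the other $r-1$ vertices, so $\deg_{F_{t+1}}(v) \geq r-1$. Any vertex $v \in V(F_t)$ only gains edges in passing from $F_t$ to $F_{t+1}$, so $\deg_{F_{t+1}}(v) \geq \deg_{F_t}(v) \geq r-1$ by the inductive hypothesis.

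For the base case $t=2$, the same argument shows that every new vertex in $V(F_2) \setminus V(F_1)$ has degree exactly $r-1$. For $v \in V(F_1)$, note that $\delta(F_1) = r-2$, so at least $r-2$ edges of $F_1$ pass through $v$, and each such edge contributes $r-2$ fresh neighbours of $v$ in the construction of $F_2$; hence $\deg_{F_2}(v) \geq (r-2) + (r-2)^2 = (r-1)(r-2)$, which is at least $r-1$ for $r \geq 3$.

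The argument is essentially routine; the only point to check carefully is the disjointness of the sets $V(e)$ across distinct edges $e \in E(F_t)$, which is built into the construction and ensures that each new vertex contributes to exactly one copy $K(e)$, so the degree counts above are correct. With the minimum-degree bound in hand, the lemma follows from the displayed inequality.
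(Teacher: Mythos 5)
Your overall strategy matches the paper exactly: the paper's proof is the same one-liner, observing $\delta(F_t) = r-1$ for $t \geq 2$ and then summing degrees over $L$. The extra detail you supply — verifying the minimum-degree claim by induction — is a reasonable thing to check, but it contains a misreading of the construction. By the definition $E(F_{t+1}) = \bigcup_{e \in E(F_t)} E(K(e))$ with $K(e) = K_r - e$, the edges of $F_t$ are \emph{not} retained in $F_{t+1}$; each $e$ is deleted and replaced by the $\binom{r}{2}-1$ edges of $K(e)$. So your claim that ``any vertex $v \in V(F_t)$ only gains edges in passing from $F_t$ to $F_{t+1}$'' is false, and likewise the base-case count $\deg_{F_2}(v) \geq (r-2) + (r-2)^2$ wrongly includes the $r-2$ old edges. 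The conclusion still stands once corrected: a vertex $v \in V(F_t)$ has $\deg_{F_{t+1}}(v) = (r-2)\deg_{F_t}(v)$, since each of its $F_t$-edges is replaced by $r-2$ edges from $v$ into the corresponding $V(e)$, and this is $\geq (r-2)^2 \geq r-1$ in the base case and $\geq (r-2)(r-1) \geq r-1$ in the inductive step. So the argument is salvageable with a local fix, but as written the degree bookkeeping does not follow the actual recursive construction of $F_t$.
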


\begin{proof}
 Note that for $t \geq 2$, $\delta(F_t) = r-1$.  Thus
 \[
  (r-1) |L| \leq \sum_{v \in L} \deg_{F_t}(v) = 2e_t(L, L) + e_t(L, L^c) \leq 2e_t(L, F_t).
 \]
\end{proof}

\subsection{Overlapping copies of $F_t$}
\label{sec:overlappingCopies}

To prove Statement \eqref{stat:1} of Theorem \ref{thm:criticalProb} we shall show that if $p$ is large enough then with high probability there is a copy of $F_t$ anchored on every pair of vertices in $G_{n,p}$. Towards this aim, we shall show that a measure of the variance of the number of such copies of $F_t$ anchored on a fixed edge $e_0$ is not too large compared to their expected number. In Section \ref{sec:upperBound}, this fact together with Janson's inequality (see Theorem \ref{thm:JansonIneq}) is used to deduce the desired result. Hence, we need to prove that it is significantly ``harder'' (in terms of the ratio of the number of edges to the number of vertices) to find two different such copies of $F_t$ that overlap in at least one vertex (other than $1,2 \in e_0$) than it is to find two disjoint such copies.

In particular, as the main result in this subsection, it is shown that for any $L \subseteq V(F_t) \setminus \{1,2\}$,
\begin{equation}\label{eq:hoped-for-lb}
\frac{e_t(L, F_t)}{|L|} \geq \frac{e_t}{v_t-2}
\end{equation}
with equality only when $L = V(F_t) \setminus \{1,2\}$. With this in mind, define $\varepsilon_t$ to be such that
\begin{equation}\label{eq:eps}
1 + \varepsilon_t =  \left ( \frac{v_t-2}{e_t} \right ) \min \left \{ \frac{e_t(L, F_t)}{|L|} : L \subsetneq V(F_t) \setminus \{1,2\} \right \}.
\end{equation}

From the definition above, there is no guarantee that $\varepsilon_t$ is non-negative. Using induction on $t$, we shall prove that this is the case by first giving a weak upper bound on $\varepsilon_t$ in Lemma \ref{lem:epsilon_tUpper} and then using it to prove a relatively sharp lower bound on $\varepsilon_t$ for all $t \geq 1$.

\begin{lemma}\label{lem:epsilon_tUpper}
	For all $r \geq 4$ and $t \geq 1$ we have $\varepsilon_{t} \leq \frac{1}{r+1}$.
\end{lemma}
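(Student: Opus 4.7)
My plan is to upper bound $\varepsilon_t$ by exhibiting a single, specific proper subset $L \subsetneq V(F_t) \setminus \{1,2\}$ whose edge-to-vertex ratio is at most $\bigl(1 + \tfrac{1}{r+1}\bigr)\tfrac{e_t}{v_t - 2}$. The natural candidate is to remove just one extra vertex from the anchor pair: take $L = V(F_t) \setminus \{1, 2, v\}$ for a cleverly chosen $v$. Then $|L| = v_t - 3$ and $e_t(L, F_t) = e_t - (\text{edges of } F_t \text{ inside } \{1,2,v\})$, so the whole argument reduces to choosing $v$ so that the triple $\{1,2,v\}$ spans as few edges of $F_t$ as possible.

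For $t \geq 2$ I would take $v$ to be a vertex in $V(F_t) \setminus V(F_{t-1})$ lying in the gadget $K(e)$ attached to some edge $e \in E(F_{t-1})$ with $\{1,2\} \cap e = \emptyset$; such an $e$ exists for every $t \geq 2$ and $r \geq 4$ (for example, in $F_1 = K_r - e_0$ the edge $\{3,4\}$ works, and for larger $t$ one can take any edge inside some $V(f)$ with $f \in E(F_{t-2})$). The neighbors of $v$ in $F_t$ are then exactly the endpoints of $e$ together with the other $r-3$ vertices of $V(e)$, none of which is $1$ or $2$; combined with $\{1,2\} \notin E(F_t)$, this forces $\{1,2,v\}$ to span no edge of $F_t$. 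So $e_t(L, F_t) = e_t$ and the required bound $1 + \varepsilon_t \leq \tfrac{v_t-2}{v_t-3}$ reduces to the elementary inequality $v_t \geq r + 4$, which is easily verified from $v_2 = 2 + (r-2)(r-1)r/2$.

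The case $t = 1$ has to be handled separately, because in $F_1 = K_r - e_0$ every vertex outside $\{1,2\}$ is adjacent to both $1$ and $2$, so the triple $\{1,2,v\}$ always contains two edges. Here I would simply take $L = \{3, \dots, r-1\}$, of size $r-3$; the triple $\{1,2,r\}$ contains exactly two edges of $F_1$, namely $\{1,r\}$ and $\{2,r\}$, so $e_1(L, F_1) = \tau - 2 = (r-3)(r+2)/2$, and a direct computation gives $1 + \varepsilon_1 \leq \tfrac{2}{r+1} \cdot \tfrac{r+2}{2} = \tfrac{r+2}{r+1}$. The main conceptual point, and really the only subtlety, is recognising that removing just one vertex in addition to $\{1,2\}$ is already enough to prove this comparatively weak bound: the estimate is not meant to be sharp, but only a stepping stone for the subsequent, much sharper lower bound on $\varepsilon_t$. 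Everything else is mechanical, using only the recursive construction of $F_t$ and the identities for $\tau$, $v_t$, and $e_t$.
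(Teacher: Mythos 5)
Your proof is correct and follows essentially the same strategy as the paper: exhibit a specific proper subset $L$ of size $v_t-3$ obtained by removing one extra vertex $v$ (in addition to $1,2$), then bound the ratio directly. The only difference is in the choice of $v$ for $t\geq 2$: you choose $v$ adjacent to neither $1$ nor $2$ (giving $e_t(L,F_t)=e_t$ and the bound $\varepsilon_t \leq 1/(v_t-3)$), whereas the paper chooses $v$ adjacent to exactly one of them (giving $e_t(L,F_t)=e_t-1$ and the marginally tighter bound $\varepsilon_t < 1/(v_t-3)$); both conclude the same way using $v_2 \geq r+4$.
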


\begin{proof}
First consider the case $t = 1$. For all $L \subsetneq V(F_1) \setminus \{1,2\} $ the vertices in $F_1 \setminus L$ induce a clique minus the edge joining $1$ and $2$. Hence, for $1 \leq \ell \leq r-3$ and $|L| = \ell$, the vertices in $F_1 \setminus L$ induce $\binom{r-\ell}{2}-1$ edges, which gives
	\[
	\begin{split}
		\frac{e_1(L, F_1)}{|L|} & = \frac{1}{\ell} \left ( \binom{r}{2} - 1 - \binom{r-\ell}{2}+1 \right ) \\
		& = \frac{1}{2\ell} \left ( r^2 - r - r^2 + 2r\ell - \ell^2 + r - \ell \right ) \\
		& = \frac{2r - \ell - 1}{2} \\
		& \geq \frac{r+2}{2} \text{,}
	\end{split}
	\]
	with equality for $\ell=r-3$. Hence, by \eqref{eq:eps}
	\begin{align*}
		\varepsilon_1 &= \left(\frac{r+2}{2}\right) \left(\frac{v_1-2}{e_1}\right) - 1 \\
					&=\left(\frac{r+2}{2} \right)\left(\frac{r-2}{\binom{r}{2}-1} \right) - 1 \\
					&=(r+2)\left(\frac{r-2}{r^2-r-2}\right) - 1 \\
					&= \frac{r+2}{r+1} - 1 = \frac{1}{r+1},
	\end{align*}
which proves the lemma for $t=1$.
	
Consider now the case when $t \geq 2$. By the construction of the graph $F_t$, there are vertices in $V(F_t) \setminus \{1, 2\}$ connected to $1$ and not to $2$. Let $v$ be such a vertex and choose $L = V(F_t) \setminus \{1,2,v\}$ so that $|L| = v_t - 3$ and $e_t(L,F_t) = e_t - 1$ (recall that $e_t(L,F_t)$ counts all edges in $F_t$ with at least one end in $L$, thus here it only misses $\{1,v\}$). Hence, for $t \geq 2$ and $r \geq 4$, from the definition in \eqref{eq:eps} we have
\begin{align*}
		\varepsilon_{t} & \leq \left(\frac{v_t-2}{e_t}\right) \left(\frac{e_t-1}{v_t-3}\right) - 1 \\
		& = \frac{e_t v_t -2 e_t - v_t + 2 - e_t v_t + 3 e_t}{e_t(v_t-3)}  \\
		& = \frac{e_t - v_t + 2}{e_t(v_t-3)}  \\
		& < \frac{1}{v_t-3}  \leq \frac{1}{v_2 - 3}\\
		& = \frac{1}{(r-2)\binom{r}{2} - 1} && \text{(by \eqref{eq:v_t} and \eqref{eq:tau})}\\
		&\leq \frac{1}{(r-2)6 -1} \leq \frac{1}{r+1}.
\end{align*}
This completes the proof of Lemma \ref{lem:epsilon_tUpper}.
\end{proof}

The following lemma gives us another result in a similar direction and is used in this section to show that one need only consider certain choices for $L \subseteq V(F_t)$ in order to determine $\varepsilon_t$.

\begin{lemma}\label{lem:minRatioUpper}
	For all $r \geq 4$ and $t \geq 2$ we have
	\[
	 \min \left \{ \frac{e_t(L, F_t)}{|L|} : L \subsetneq V(F_t) \setminus \{1,2\} \right \} < \frac{r+1}{2}.
	\]
\end{lemma}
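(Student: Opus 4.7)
The strategy is simply to exhibit a set $L \subsetneq V(F_t) \setminus \{1,2\}$ whose ratio $e_t(L,F_t)/|L|$ is strictly less than $(r+1)/2$. The natural candidate is suggested by the recursive construction: I would first try $L^* = V(F_t) \setminus V(F_{t-1})$, the set of vertices freshly introduced at the final level of the recursion. By construction $|L^*| = (r-2)e_{t-1} = (r-2)\tau^{t-1}$, and the remark immediately after the construction of $F_{t+1}$ says precisely that every edge of $F_t$ is incident to at least one vertex of $L^*$, so $e_t(L^*, F_t) = e_t = \tau^t$. Plugging in and using $\tau = (r+1)(r-2)/2$ from \eqref{eq:tau} yields ratio exactly $\tau/(r-2) = (r+1)/2$, which matches the target but only with equality.

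To sharpen equality to strict inequality I would augment $L^*$ by a single vertex of $V(F_{t-1}) \setminus \{1,2\}$. The key structural observation, read off from the recursion, is that for $t \geq 2$ no edge of $F_t$ has both endpoints inside $V(F_{t-1})$: any edge of $F_t$ lies in some $K(e') = K_r - e'$ with $e' \in E(F_{t-1})$, where $V(e')$ consists of brand new vertices disjoint from $V(F_{t-1})$; the only candidate edge of $K(e')$ lying entirely in $V(F_{t-1})$ would be $e'$ itself, which is exactly the omitted edge. Hence for $L = L^* \cup \{v\}$ with $v \in V(F_{t-1}) \setminus \{1,2\}$, every $F_t$-edge incident to $v$ actually lands in $L^*$ and is therefore already counted in $e_t(L^*, F_t)$. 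Thus $e_t(L, F_t) = \tau^t$ while $|L| = (r-2)\tau^{t-1} + 1$, giving
\[
\frac{e_t(L,F_t)}{|L|} = \frac{\tau^t}{(r-2)\tau^{t-1}+1} < \frac{\tau}{r-2} = \frac{r+1}{2}.
\]

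Finally I would check that the choice is legitimate: for $t \geq 2$ we have $V(F_{t-1}) \supseteq V(F_1)$, which contains $r-2 \geq 2$ vertices outside $\{1,2\}$, so a suitable $v$ exists and $L$ still leaves at least one vertex of $V(F_{t-1}) \setminus \{1,2\}$ uncovered, so $L \subsetneq V(F_t) \setminus \{1,2\}$. I do not anticipate any real obstacle in this argument; the whole proof rests on the single structural fact that every edge of $F_t$ crosses the boundary $V(F_t) \setminus V(F_{t-1})$, which is already implicit in the recursion. Note also that one cannot simply derive this lemma from Lemma \ref{lem:epsilon_tUpper} via \eqref{eq:crucialRatio}, since $\lambda(1+c_t)(1+1/(r+1))$ already exceeds $(r+1)/2$ for $r \geq 5$; hence a direct combinatorial construction of $L$, as above, really is needed.
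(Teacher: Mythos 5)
Your proof is correct, and it takes a genuinely different route from the paper's. The paper exhibits a small, local set: it picks a vertex $v \in V(F_{t-1})\setminus V(F_{t-2})$, takes $L$ to be $v$ together with all its $(r-1)(r-2)$ neighbours in the newest layer, and computes $|L| = 1+(r-1)(r-2)$ and $e_t(L,F_t) = (r-1)\tau$, giving ratio $\frac{r+1}{2}\cdot\frac{(r-1)(r-2)}{1+(r-1)(r-2)} < \frac{r+1}{2}$. You instead take a global set: the entire newest layer $L^* = V(F_t)\setminus V(F_{t-1})$ (which achieves ratio exactly $(r+1)/2$, since every edge of $F_t$ is incident to $L^*$), then perturb by adding one old vertex $v$, and use the structural fact that no edge of $F_t$ lies inside $V(F_{t-1})$ to conclude that the numerator is unchanged while the denominator grows. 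The paper's choice has $|L|$ bounded independently of $t$ and requires computing both numerator and denominator from scratch; your choice grows with $t$ but exploits the already-stated observation that all edges cross the last layer, so the edge count is free and only a one-step monotonicity argument is needed. Both yield strict inequality, and your closing remark that Lemma \ref{lem:epsilon_tUpper} alone cannot yield the strict bound (since $\frac{r+2}{r+1}\lambda(1+c_t)$ can meet or exceed $(r+1)/2$) is also accurate, confirming that some explicit construction of $L$ is genuinely required.
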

\begin{proof}
 We prove the lemma by giving an example of a simple set $L \subsetneq V(F_t) \setminus \{1,2\}$ that satisfies the inequality. Let $v \in V(F_{t-1}) \setminus V(F_{t-2})$. Then, let
 \[
  L = \{v\} \cup \left \{ u \in V(F_{t}) \setminus V(F_{t-1}) : \{v,u\} \in E(F_{t}) \right \}.
 \]
 Since $v \in V(F_{t-1}) \setminus V(F_{t-2})$, we have $\deg_{F_{t-1}}(v) = r-1$. In $F_t$, every edge of $F_{t-1}$ is replaced with a copy of $K_r$ minus that edge. Hence for every edge adjacent to $v$ in $F_{t-1}$ there are $r-2$ neighbours of $v$ in $F_t$. This implies
 \[
 |L| = 1+\deg_{F_t}(v) = 1+(r-2) \deg_{F_{t-1}}(v)  = 1+(r-1)(r-2).
 \]
To find $e_{t}(L,F_t)$, i.e., the number of edges adjacent to at least one vertex in $L$, we notice that all these edges are contained in the $\deg_{F_{t-1}}(v) = r-1$ copies of $K_r$ minus an edge, that are placed on the edges adjacent to $v$ in $F_{t-1}$ to construct the graph $F_t$. Therefore we have
 \[
  e_{t}(L,F_t) = (r-1) \left ( \binom{r}{2}-1 \right ) = \frac{(r-1)(r+1)(r-2)}{2}.
 \]
This implies that
 \[
  \frac{e_{t}(L,F_t)}{|L|} = \frac{(r+1)}{2} \frac{(r-1)(r-2)}{1+(r-1)(r-2)} < \frac{(r+1)}{2}.
 \]
\end{proof}

We observe that, in general, Lemma \ref{lem:minRatioUpper} yields a worse upper bound on $\varepsilon_t$ than that given by Lemma \ref{lem:epsilon_tUpper}, but its form is useful in the proof of Lemma \ref{claim:full-blobs} to come.

The next theorem is the main tool in the proof of Statement \eqref{stat:1} of Theorem \ref{thm:criticalProb} to come. Here we give a lower bound on $\varepsilon_t$ that holds for all $t \geq 1$.

\begin{theorem}\label{thm:epsilon_tLower}
	For all $r \geq 4$ and $t \geq 1$,
	\begin{equation}\label{eq:epsilon_t}
		\varepsilon_{t} \geq \frac{1}{r+1} \left ( \frac{2}{r^2-2} \right )^{t-1}.
	\end{equation}
\end{theorem}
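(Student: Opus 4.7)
The plan is to prove the bound by induction on $t$. The base case $t=1$ is the explicit calculation $\varepsilon_1 = 1/(r+1)$ already performed in the proof of Lemma~\ref{lem:epsilon_tUpper}, which matches the claimed bound at exponent $t-1=0$.

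For the inductive step, I fix $L \subsetneq V(F_t) \setminus \{1,2\}$ and exploit the recursive construction of $F_t$. Writing $L_{\mathrm{old}} = L \cap V(F_{t-1})$, and for each edge $e = \{u,v\} \in E(F_{t-1})$ setting $\ell_e = |L \cap V(e)|$ (the new vertices of the copy of $K_r - e$ placed on $e$) and $a_e = |L \cap \{u,v\}|$, edge-disjointness of the copies $K(e)$ yields
\[
 e_t(L, F_t) = \sum_{e \in E(F_{t-1})} f(\ell_e, a_e), \qquad f(\ell, a) = a(r-2) + \tfrac{\ell(2r-\ell-1-2a)}{2},
\]
with $|L| = |L_{\mathrm{old}}| + \sum_e \ell_e$ and $\sum_e a_e = \sum_{u \in L_{\mathrm{old}}} \deg_{F_{t-1}}(u)$. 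Abbreviating $\mu_s := e_s/(v_s-2) = \lambda(1+c_s)$ and $\varepsilon_t^{\ast} := \frac{1}{r+1}\bigl(\frac{2}{r^2-2}\bigr)^{t-1}$, the target is $e_t(L,F_t)/|L| \geq (1+\varepsilon_t^{\ast})\mu_t$.

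I split into three cases according to the position of $L_{\mathrm{old}}$ in $V(F_{t-1}) \setminus \{1,2\}$. If $L_{\mathrm{old}} = \emptyset$ then every $a_e = 0$; the estimate $(2r-\ell-1)/2 \geq (r+1)/2$ for $\ell \leq r-2$ gives $e_t(L,F_t)/|L| \geq (r+1)/2$, and comparing via $(r+1)/2 = \lambda + 1/(r-2)$ and the explicit form of $\mu_t$ shows $(r+1)/2 \geq (1+\varepsilon_t^{\ast})\mu_t$ for $t \geq 2$ with considerable room. If $L_{\mathrm{old}}$ is a proper nonempty subset of $V(F_{t-1}) \setminus \{1,2\}$, the inductive hypothesis gives $|A_1| + |A_2| = e_{t-1}(L_{\mathrm{old}}, F_{t-1}) \geq (1+\varepsilon_{t-1})\mu_{t-1}|L_{\mathrm{old}}|$, where $A_i := \{ e : a_e = i \}$; combining this with the refined bound $(2r - \ell - 1 - 2a)/2 \geq (r+1-2a)/2$ on the new-vertex contribution and with the baseline $(r-2)(|A_1| + 2|A_2|)$ from old vertices reduces the target to a linear inequality in $|L_{\mathrm{old}}|$, $|A_1|$, $|A_2|$, and $\ell^{(i)} := \sum_{e : a_e = i} \ell_e$, which I verify by matching surplus against deficit across the extremal configurations. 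Finally, if $L_{\mathrm{old}} = V(F_{t-1}) \setminus \{1,2\}$, the number of edges of $F_{t-1}$ incident to $\{1,2\}$ is exactly $2(r-2)^{t-1}$, determining $|A_1|$ and $|A_2|$, and a direct computation (with any $\ell_e > 0$ contributions handled as in the first case) completes the argument.

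The hardest step is the algebraic balancing in the middle case. The inductive hypothesis supplies a surplus of $(r-2)(1+\varepsilon_{t-1})\mu_{t-1} - (1+\varepsilon_t^{\ast})\mu_t$ per old vertex, while each new vertex sitting in a copy $K(e)$ with $a_e \in \{1,2\}$ may carry a deficit of up to $(1+\varepsilon_t^{\ast})\mu_t - (r-3)/2$. Balancing these across the tight configurations---which arise when a single vertex $u \in L_{\mathrm{old}}$ is surrounded by several copies $K(e)$ with $\ell_e = r-2$ (mirroring the construction used for the upper bound in Lemma~\ref{lem:minRatioUpper})---forces the recurrence $\varepsilon_t^{\ast}/\varepsilon_{t-1} \leq 2/(r^2-2)$, which unpacks, using $(r-2)\lambda = \tau-1$ and $\mu_s = \lambda\tau^s/(\tau^s-1)$, to exactly the damping factor claimed in the theorem.
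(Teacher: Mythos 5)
Your overall framework---inducting on $t$, decomposing $L$ as $L_{\mathrm{old}} = L \cap V(F_{t-1})$ together with the profile $(\ell_e, a_e)_{e \in E(F_{t-1})}$, and exploiting edge-disjointness of the copies $K(e)$ to write $e_t(L,F_t) = \sum_e f(\ell_e, a_e)$---is sound and closely parallels the paper's argument. Your formula $f(\ell,a) = a(r-2) + \ell(2r-\ell-1-2a)/2$ is correct. Your first case ($L_{\mathrm{old}} = \emptyset$) is the content of Lemma~\ref{lem:L_tEmpty} combined with Lemma~\ref{lem:minRatioUpper}, and your middle case, once you push $\ell^{(1)}, \ell^{(2)}$ to their maxima $(r-2)|A_i|$ and apply the inductive hypothesis via $|A_1|+|A_2| = e_{t-1}(L_{\mathrm{old}}, F_{t-1})$, does reduce to exactly the recursion the paper obtains in \eqref{eq:eps-t+1-pos1}; your explicit LP-type argument replaces the paper's sequence of exchange lemmas (\ref{claim:full-blobs}, \ref{claim:empty-blobs}).

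The genuine gap is the final case $L_{\mathrm{old}} = V(F_{t-1}) \setminus \{1,2\}$. Here the inductive hypothesis supplies no information (the minimum in \eqref{eq:eps} is over \emph{proper} subsets), and you must instead use the exact values $|L_{\mathrm{old}}| = v_{t-1}-2$, $|A_1|+|A_2| = e_{t-1}$. Your proposal to handle the $\ell_e>0$ contributions via the linearized bound $(2r-\ell-1-2a)/2 \geq (r+1-2a)/2$ is not sharp enough. Concretely, with this bound the minimizing configuration (maximize $\ell^{(2)}$ then $\ell^{(1)}$, total $\sum \ell_e = (r-2)e_{t-1}-1$) gives $e_t(L,F_t) \geq e_t - (r-1)/2$ over $|L| = v_t-3$. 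For $r=4$, $t=2$ this yields $\varepsilon_2 \geq \frac{12\cdot 23.5}{25\cdot 11} - 1 = \frac{7}{275} \approx 0.0255$, which is \emph{below} the target $\frac{1}{r+1}\bigl(\frac{2}{r^2-2}\bigr)^{t-1} = \frac{1}{35} \approx 0.0286$. The linearization throws away $(r-3)/2$ from the edge count exactly where it is needed. The paper avoids this by Lemmas~\ref{claim:all-but-one-blobs-1} and~\ref{claim:all-but-one-blobs-2}, which show that an extremal $M$ leaves out either a single vertex $w$ or a full $V(e)$ for $e$ incident to $\{1,2\}$, and then performs the exact computation in the two configurations \eqref{eq:hugeL_t-1}, \eqref{eq:hugeL_t-2}: in the one-vertex case the true count is $e_t - 1$, not $e_t - (r-1)/2$, because the missing vertex $w$ has at most one neighbour (in $\{1,2\}$) outside $L$. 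You need this sharper structural analysis; a ``direct computation'' using only your refined per-vertex bound will not close this case.

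Two smaller points: (i) the figure $2(r-2)^{t-1}$ for the number of edges of $F_{t-1}$ incident to $\{1,2\}$ is correct but never actually enters the extremal bound, since in the one-vertex-missing configuration only the location of $w$ matters; (ii) the recursion step still needs a uniform bound $\varepsilon_{t-1}^{\ast} \leq \tfrac{1}{r+1}$ (or Lemma~\ref{lem:epsilon_tUpper}) together with monotonicity of $x \mapsto x/(1+c(1+x))$ to pass from the inductive hypothesis to the multiplicative factor $\tfrac{2}{r^2-2}$; you should make this explicit.
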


Before proving Theorem \ref{thm:epsilon_tLower}, a few auxiliary lemmas are stated and proved below, along with an outline of the proof.  These are then used to establish Theorem \ref{thm:epsilon_tLower} by induction on $ t $.

Recall that the graph $F_{t+1}$ is constructed by placing an independent copy of $K_r-e$ on every edge $e$ of $F_t$. Further recall that for each $e \in E(F_t)$ we write $V(e)$ to denote this new set of $r-2$ vertices.

For the induction step from $t$ to $(t+1)$ we will fix a set $L_t \subseteq V(F_t) \setminus \{1,2\}$ and look for the smallest possible edge densities $\frac{e_{t+1}(L_t \cup M, F_{t+1})}{|L_t \cup M|}$ among sets of the form $L_t \cup M$ where $M \subseteq V(F_{t+1}) \setminus V(F_t)$ (see Figure \ref{figure:subsetsOfF_t}).

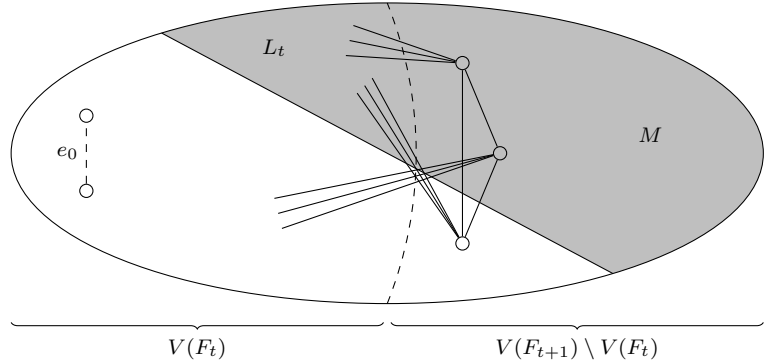
\begin{figure}[htb] \centering
  \begin{tikzpicture}
    \tikzstyle{vertex}=[draw,shape=circle,minimum size=5pt,inner sep=0pt]
    
    \begin{scope}
      \clip (0,0) ellipse (5cm and 2cm);
      \fill[color = lightgray] (-6,3.2) -- (6,3.2) -- (6,-3.2) -- cycle;
    \end{scope}
    \draw (0,0) ellipse (5cm and 2cm);
    \foreach \name/\x/\y in {1/-4/-0.5, 2/-4/0.5, 3/1/1.2, 4/1/-1.2, 5/1.5/0}
      \node[vertex] (P-\name) at (\x,\y) {~};
    \draw (-4,0) node [left] {$e_0$};
    \foreach \from/\to in {1/2}
      {\draw[dashed] (P-\from) -- (P-\to);}
    \draw [dashed] (0,-2) arc (-21.8:21.8:5.385);
    \draw (-3, 1.6) -- (3,-1.6);
    \draw[black] (-1.5,1.4) node {$L_{t}$};
    \draw[black] (3.5,0.25) node {$M$};
    
    \foreach \x/\y in {-0.45/1.7, -0.5/1.5, -0.55/1.3}
      \draw (P-3) -- (\x,\y);
    \foreach \x/\y in {-0.2/1, -0.3/0.9, -0.4/0.8}
      \draw (P-4) -- (\x,\y);
    \foreach \x/\y in {-1.4/-1, -1.45/-0.8, -1.5/-0.6}
      \draw (P-5) -- (\x,\y);
    \foreach \x/\y in {3/4, 3/5, 4/5}
      \draw (P-\x) -- (P-\y);
      
    \draw [decoration={brace, mirror}, decorate] (-5,-2.25) -- (-0.05,-2.25) node [pos=0.5,anchor=north,yshift=-0.1cm] {$V(F_t)$}; 
    \draw [decoration={brace, mirror}, decorate] (0.05,-2.25) -- (5,-2.25) node [pos=0.5,anchor=north,yshift=-0.1cm] {$V(F_{t+1}) \setminus V(F_t)$};
    
  \end{tikzpicture}
  \caption{Sets $L_t$ and $M$ in $F_{t+1}$ together with the edges counted in $e_{t+1}(L_t \cup M, F_{t+1})$.}
  \label{figure:subsetsOfF_t}
\end{figure}

In the following lemma we first deal with the case $L_t = \emptyset$, showing that no set contained entirely in $V(F_{t+1}) \setminus V(F_t)$ can minimise the edge density.

\begin{lemma}\label{lem:L_tEmpty}
	For all $r \geq 4$ and $t \geq 2$ we have
	\[
	 \min \left \{ \frac{e_t(L, F_t)}{|L|} : L \subseteq V(F_t) \setminus V(F_{t-1}) \right \} \geq \frac{r+1}{2}.
	\]
\end{lemma}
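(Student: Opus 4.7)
The plan is to exploit the fact that $L$ lies entirely in the ``newest'' layer $V(F_t)\setminus V(F_{t-1})$, so the problem decomposes cleanly across the edges $e \in E(F_{t-1})$ that were expanded to build $F_t$. Recall that by construction the sets $\{V(e) : e \in E(F_{t-1})\}$ form a partition of $V(F_t)\setminus V(F_{t-1})$, and the edge sets $\{E(K(e)) : e \in E(F_{t-1})\}$ form a partition of $E(F_t)$ (each $K(e) = K_r - e$ lies on the two endpoints of $e$ together with the $r-2$ new vertices of $V(e)$, and different $V(e)$'s are disjoint). Thus if I write $L_e = L \cap V(e)$ and $\ell_e = |L_e|$, then $|L| = \sum_e \ell_e$ and $e_t(L, F_t) = \sum_e f(\ell_e)$, where $f(\ell)$ counts the edges of $K_r - e$ having at least one endpoint in a prescribed set of $\ell$ vertices inside $V(e)$.

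Next I would compute $f(\ell)$ explicitly. Edges with an endpoint in $L_e$ split into those going from $L_e$ to the two endpoints of $e$ (of which there are $2\ell$) and those lying inside $V(e)$ and touching $L_e$ (of which there are $\binom{r-2}{2} - \binom{r-2-\ell}{2}$). Using the identity $\binom{r-2}{2} - \binom{r-2-\ell}{2} = \ell(2r-5-\ell)/2$ this simplifies to
\[
f(\ell) \;=\; \frac{\ell\,(2r-1-\ell)}{2}.
\]

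The desired per-edge inequality is then $f(\ell)/\ell \geq (r+1)/2$ for every $1 \leq \ell \leq r-2$, which is equivalent to $\ell \leq r-2$, with equality exactly when $\ell = r-2$ (i.e.\ when all of $V(e)$ is inside $L$). Summing over the edges $e$ with $\ell_e \geq 1$ immediately gives
\[
e_t(L,F_t) \;=\; \sum_{e:\, \ell_e \geq 1} f(\ell_e) \;\geq\; \frac{r+1}{2} \sum_{e:\, \ell_e \geq 1} \ell_e \;=\; \frac{r+1}{2}\,|L|,
\]
which is the claim.

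I do not expect any real obstacle here: once the decomposition $e_t(L,F_t) = \sum_e f(\ell_e)$ is in place, everything reduces to the single-$K_r$-minus-an-edge calculation, which is a one-line quadratic in $\ell$. The only thing to be careful about is the disjointness of the edge sets $E(K(e))$, which is where the hypothesis $L \subseteq V(F_t)\setminus V(F_{t-1})$ is used in an essential way: if $L$ were allowed to intersect $V(F_{t-1})$, edges of $F_t$ incident to a vertex in $V(F_{t-1})\cap L$ could be counted from several different $K(e)$'s simultaneously, and the simple per-edge bookkeeping above would collapse.
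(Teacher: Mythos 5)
Your proof is correct, and it takes a different route from the paper's. The paper argues locally, vertex by vertex: since $t \geq 2$, every $v \in L$ has $\deg_{F_t}(v) = r-1$, and because $L$ avoids $V(F_{t-1})$ (in particular the two endpoints of the edge $e \in E(F_{t-1})$ whose blob $V(e)$ contains $v$), at most $r-3$ of those neighbours lie in $L$. Splitting $\deg_{F_t}(v)$ into the part inside $L$ (counted with weight $1/2$ to compensate double-counting) and the part outside $L$ gives a per-vertex contribution of at least $\frac{r-3}{2} + 2 = \frac{r+1}{2}$ to $e_t(L,F_t)$, and summing over $v \in L$ finishes the proof in three lines. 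You instead argue blob by blob: you observe that $\{V(e)\}_{e \in E(F_{t-1})}$ partitions $V(F_t)\setminus V(F_{t-1})$ and $\{E(K(e))\}$ partitions $E(F_t)$, reduce to the per-clique count $f(\ell) = \ell(2r-1-\ell)/2$, and verify $f(\ell)/\ell \geq (r+1)/2$ for $\ell \leq r-2$. Both approaches hinge on the same structural fact (vertices in the outermost layer have all their neighbours inside a single blob $V(e) \cup e$, with the two anchor endpoints automatically outside $L$), but your decomposition yields the exact formula $f(\ell)$ and hence an equality characterisation ($\ell_e \in \{0, r-2\}$ for every $e$), whereas the paper's degree-sum argument is shorter and reuses the already-stated observation $\delta(F_t) = r-1$ from Lemma \ref{lem:degree}. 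Either proof would serve.
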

\begin{proof}
 Let $L \subseteq V(F_t) \setminus V(F_{t-1})$. Hence for every $v \in L$ we have $\deg_{F_t}(v) = r-1$ and at most $r-3$ neighbours of $v$ are also in $L$. Thus $e_t(L, F_t)$, the number of edges incident to at least one vertex in $L$, satisfies
 \[
e_t(L, F_t) \geq  |L| \left ( \frac{r-3}{2} + 2 \right ) =  |L| \frac{r+1}{2}
 \]
 and the lemma follows.
\end{proof}

Thus, by Lemma \ref{lem:minRatioUpper}, the minimum in equation \eqref{eq:eps} is not attained for any $L \subseteq V(F_t) \setminus V(F_{t-1})$. We now show that if for some $e \in E(F_t)$ certain conditions are fulfilled, then moving all vertices from $V(e)$ into $M$ does not increase the edge-density minimised in equation \eqref{eq:eps}. The details of this are given in the following lemma.

\begin{lemma}\label{claim:full-blobs}
	Let $t \geq 1$, $L_{t} \subseteq V(F_{t}) \setminus \{1,2\}$ with $L_t \neq \emptyset$, and $ M \subseteq V(F_{t+1}) \setminus V(F_t) $. For every edge $e = \{x,y\} \in E(F_{t})$ such that at least one of $x,y$ is in $L_{t}$ we have 
	\[
	\frac{e_{t+1}(L_{t} \cup M, F_{t+1})}{|L_{t} \cup M|} \geq \frac{e_{t+1}(L_{t} \cup M \cup V(e), F_{t+1})}{|L_{t} \cup M \cup V(e)|}.
	\]
\end{lemma}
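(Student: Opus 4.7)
The plan is to convert the inequality into a comparison of two ratios via the elementary observation that $\tfrac{A}{B} \geq \tfrac{A+A'}{B+B'}$ (for positive $B, B'$) is equivalent to $\tfrac{A'}{B'} \leq \tfrac{A}{B}$. Setting $W := V(e) \setminus (L_t \cup M)$ and disposing of the trivial case $W = \emptyset$, it therefore suffices to show that the ratio of newly-counted edges to $|W|$ is at most the existing ratio $\tfrac{e_{t+1}(L_t \cup M, F_{t+1})}{|L_t \cup M|}$. I would aim to prove that these two quantities sit on opposite sides of $\tfrac{r-1}{2}$.

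For the lower bound on the existing ratio, I would simply invoke Lemma \ref{lem:degree}, applied to $F_{t+1}$ (which is permitted since $t+1 \geq 2$) and the set $L_t \cup M \subseteq V(F_{t+1})$, obtaining $e_{t+1}(L_t \cup M, F_{t+1}) \geq \tfrac{r-1}{2}\,|L_t \cup M|$. Note that this uses $L_t \neq \emptyset$ to guarantee that the set has positive size.

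For the upper bound on the marginal ratio, I would exploit the fact that, by the recursive construction of $F_{t+1}$, every vertex of $V(e) \subseteq V(F_{t+1}) \setminus V(F_t)$ has all of its neighbours in $V(e) \cup \{x,y\}$. Writing $a = |V(e) \cap M|$, so that $|W| = r-2-a$ (and noting $V(e) \cap L_t = \emptyset$ since $L_t \subseteq V(F_t)$), the edges incident to $W$ and to no vertex of $L_t \cup M$ split into: the $\binom{r-2-a}{2}$ edges inside $W$, and the edges from $W$ to $\{x,y\} \setminus (L_t \cup M) = \{x,y\} \setminus L_t$ (using $M \cap V(F_t) = \emptyset$). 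By the hypothesis that at least one of $x, y$ lies in $L_t$, the latter set has at most one element, contributing at most $r-2-a$ edges. The marginal ratio is then bounded above by $\tfrac{r-3-a}{2} + 1 = \tfrac{r-1-a}{2} \leq \tfrac{r-1}{2}$, matching the lower bound above and closing the argument.

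The only delicate point is the precise role of the anchor-vertex hypothesis: without it, both $x$ and $y$ could contribute an extra $r-2-a$ new edges each, pushing the marginal ratio up to $\tfrac{r+1-a}{2}$ and destroying the comparison against $\tfrac{r-1}{2}$. Once this is flagged, the remainder is a clean combination of an elementary ratio identity with the minimum-degree bound of Lemma \ref{lem:degree}, and no deeper structural properties of $F_{t+1}$ are needed.
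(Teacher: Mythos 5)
Your proposal is correct and takes essentially the same approach as the paper: the paper also reduces to comparing the old ratio with the marginal ratio of $\binom{m}{2}$ (or $\binom{m+1}{2}$) new edges over $m = |V(e)\setminus M|$ new vertices, observes that the latter is at most $\tfrac{r-1}{2}$, and bounds the former from below via Lemma~\ref{lem:degree}. The only cosmetic difference is that the paper writes the new ratio explicitly as a convex combination and splits into the two sub-cases $|\{x,y\}\cap L_t|=1$ and $=2$, whereas you invoke the mediant inequality directly and handle the anchor contribution uniformly.
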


\begin{proof}
	Let us recall that, by the construction of $F_{t+1}$, all edges incident to vertices in $V(e)$ are either induced by $V(e)$ or they connect the vertices of $V(e)$ to either $x$ or $y$.
	
	Set $m = |V(e) \setminus M|$ and note that the conclusion is trivially true for $ m = 0 $. Thus, assume that $1 \leq m \leq r-2$. 
	
	We consider two different cases. Suppose first that both $x, y \in L_{t}$. Then $e_{t+1}(L_{t} \cup M, F_{t+1})$ already counts all edges incident to a vertex in $V(e) \setminus M$, except those that are induced by this set (because the remaining edges are either incident to $x,y \in L_t$ or to some vertex in $V(e) \cap M$). Since $V(e)$ induces a clique in $F_{t+1}$, we have $e_{t+1}(L_{t} \cup M \cup V(e), F_{t+1}) - e_{t+1}(L_{t} \cup M, F_{t+1}) = \binom{m}{2}$. Therefore
	\begin{align*}
		\frac{e_{t+1}(L_{t} \cup M \cup V(e), F_{t+1})}{|L_{t} \cup M \cup V(e)|}
		& = \frac{e_{t+1}(L_{t} \cup M, F_{t+1}) + \binom{m}{2}}{|L_{t} \cup M| + m}\\
		&=\frac{e_{t+1}(L_{t} \cup M, F_{t+1})}{|L_{t} \cup M|} \cdot \frac{|L_{t} \cup M|}{|L_{t} \cup M|+m} + \frac{\binom{m}{2}}{m} \frac{m}{|L_{t} \cup M| + m}\\
		& = \frac{e_{t+1}(L_{t} \cup M, F_{t+1})}{|L_{t} \cup M|} \cdot \frac{|L_{t} \cup M|}{|L_{t} \cup M|+m} + \frac{m-1}{2} \frac{m}{|L_{t} \cup M| + m}\\
		& \leq \frac{e_{t+1}(L_{t} \cup M, F_{t+1})}{|L_{t} \cup M|} \cdot \frac{|L_{t} \cup M|}{|L_{t} \cup M|+m} + \frac{r-3}{2} \frac{m}{|L_{t} \cup M| + m}.
	\end{align*}
	
	The above expression is a convex combination of $\frac{r-3}{2}$ and $\frac{e_{t+1}(L_{t} \cup M, F_{t+1})}{|L_{t} \cup M|}$.  By Lemma \ref{lem:degree}, we have $\frac{e_{t+1}(L_{t} \cup M, F_{t+1})}{|L_{t} \cup M|} \geq \frac{r-1}{2} > \frac{r-3}{2}$ and so
	\begin{equation*}
		\frac{e_{t+1}(L_{t} \cup M \cup V(e), F_{t+1})}{|L_{t} \cup M \cup V(e)|}  \leq \frac{e_{t+1}(L_{t} \cup M, F_{t+1})}{|L_{t} \cup M|}.	
	\end{equation*}
	
	The case when $x \in L_{t}$ and $y \notin L_{t}$ is similar. In this case, $e_{t+1}(L_{t} \cup M, F_{t+1})$ counts every edge incident to a vertex in $V(e) \setminus M$ that is neither induced by this set nor connects a vertex in this set to $y$. Since $\{y\} \cup V(e)$ induces a clique in $F_{t+1}$, we have $e_{t+1}(L_{t} \cup M \cup V(e), F_{t+1}) - e_{t+1}(L_{t} \cup M, F_{t+1}) = \binom{m+1}{2}$, and thus
	\begin{align*}
		\frac{e_{t+1}(L_{t} \cup M \cup V(e), F_{t+1})}{|L_{t} \cup M \cup V(e)|}
		& = \frac{e_{t+1}(L_{t} \cup M, F_{t+1}) + \binom{m+1}{2}}{|L_{t} \cup M| + m}\\
		&=\frac{e_{t+1}(L_{t} \cup M, F_{t+1})}{|L_{t} \cup M|} \cdot \frac{|L_{t} \cup M|}{|L_{t} \cup M|+m} + \frac{m+1}{2} \frac{m}{|L_{t} \cup M| + m}\\
		&\leq \frac{e_{t+1}(L_{t} \cup M, F_{t+1})}{|L_{t} \cup M|} \cdot \frac{|L_{t} \cup M|}{|L_{t} \cup M|+m} + \frac{r-1}{2} \frac{m}{|L_{t} \cup M| + m}\\
		& \leq  \frac{e_{t+1}(L_{t} \cup M, F_{t+1})}{|L_{t} \cup M|}.	\qquad \text{(again, by Lemma \ref{lem:degree})}
	\end{align*}
	
	The case $x \notin L_{t}$ and $y \in L_{t}$ is analogous and this completes the proof.
\end{proof}

On the other hand, when the edge $e$ does not satisfy the conditions in Lemma \ref{claim:full-blobs}, the following lemma holds.

\begin{lemma}\label{claim:empty-blobs}
	Let $t \geq 1$, $L_{t} \subseteq V(F_{t}) \setminus \{1,2\}$ with $L_t \neq \emptyset$, $e = \{x,y\} \in E(F_t)$ such that $x,y \notin L_t$, and $ M \subseteq V(F_{t+1}) \setminus V(F_t) $.  If $|V(e) \cap M| > 0$ then
	\[
	\frac{e_{t+1}(L_{t} \cup M, F_{t+1})}{|L_{t} \cup M|} > \min \left \{ \frac{e_{t+1}(L, F_{t+1})}{|L|} : L \subsetneq V(F_{t+1}) \setminus \{1,2\} \right \}\text{.}
	\]
\end{lemma}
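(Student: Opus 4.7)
The plan is to show that passing from $A := L_t \cup M$ to the strictly smaller set $A' := L_t \cup (M \setminus V(e)) = A \setminus (V(e) \cap M)$ strictly decreases the edge-density, contradicting any supposition that the density of $A$ equals the minimum on the right-hand side.

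Setting $k := |V(e) \cap M|$, the hypothesis gives $1 \leq k \leq r-2$. By construction of $F_{t+1}$, every $F_{t+1}$-neighbour of a vertex in $V(e)$ lies in $V(e) \cup \{x, y\}$. Because $x, y \notin L_t$ by hypothesis and $x, y \notin M$ (as $M \cap V(F_t) = \emptyset$), every edge incident to $V(e) \cap M$ has its other endpoint outside $A$ or inside $V(e) \cap M$, so all such edges are lost when $V(e) \cap M$ is deleted. A short count yields
\[
\Delta_E \ := \ e_{t+1}(A, F_{t+1}) - e_{t+1}(A', F_{t+1}) \ = \ 2k + k(r-2-k) + \binom{k}{2} \ = \ k\Bigl(r - \tfrac{k+1}{2}\Bigr),
\]
so $\Delta_E / k \geq (r+1)/2$, with equality only at $k = r-2$.

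Next I would verify that both $A$ and $A'$ are non-empty proper subsets of $V(F_{t+1}) \setminus \{1,2\}$. Non-emptiness follows from $L_t \neq \emptyset$. The set $A'$ is a proper subset because $A' \cap V(e) = \emptyset$ while $V(e) \subseteq V(F_{t+1}) \setminus \{1,2\}$. For $A$, the equality $A = V(F_{t+1}) \setminus \{1,2\}$ would force $L_t = V(F_t) \setminus \{1,2\}$, and then $x, y \notin L_t$ together with $x, y \in V(F_t)$ would give $\{x, y\} = \{1, 2\} = e_0$, contradicting $e_0 \notin E(F_t)$.

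Finally, let $\mu$ denote the minimum appearing on the right-hand side of the claim. By Lemma \ref{lem:minRatioUpper} applied at level $t+1 \geq 2$, $\mu < (r+1)/2 \leq \Delta_E/k$. Since $A$ is admissible, $e_{t+1}(A, F_{t+1})/|A| \geq \mu$; if equality held, then
\[
\frac{e_{t+1}(A', F_{t+1})}{|A'|} = \frac{\mu |A| - \Delta_E}{|A| - k} < \mu,
\]
where the strict inequality uses $\Delta_E/k > \mu$. This contradicts the admissibility of $A'$ in the minimisation, so $e_{t+1}(A, F_{t+1})/|A| > \mu$, as required. The only delicate point is verifying that $A$ itself is a proper subset of $V(F_{t+1}) \setminus \{1,2\}$ so that it participates in the minimum at all; this is precisely where the non-membership of $e_0$ in $E(F_t)$ enters.
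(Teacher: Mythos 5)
Your proof is correct and follows essentially the same approach as the paper: compare $L_t \cup M$ to $L_t \cup M \setminus V(e)$, count the $k\left(r-\frac{k+1}{2}\right) \geq k\cdot\frac{r+1}{2}$ edges lost, and invoke Lemma \ref{lem:minRatioUpper} to see that this per-vertex loss exceeds the minimum density. The only cosmetic difference is that you close the argument by contradiction (assuming equality to $\mu$) and therefore also verify $L_t \cup M \subsetneq V(F_{t+1})\setminus\{1,2\}$; the paper instead writes the density of $L_t \cup M$ as a convex combination of the density of $L_t \cup M \setminus V(e)$ and $r-\frac{m+1}{2}$ and splits on whether the density of $L_t \cup M$ is at least $\frac{r+1}{2}$, which bypasses the need to show $L_t \cup M$ is itself a proper subset.
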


\begin{proof}
Set $m = |V(e) \cap M| > 0$ and recall that we have $m \leq r-2$. Since $L_t \neq \emptyset$, we also have $L_{t} \cup M \setminus V(e) \neq \emptyset$. The aim is, by considering the set $L_t \cup M \setminus V(e)$, to show that the set $L_t \cup M$ does not attain the minimum in equation \eqref{eq:eps}.

The edges incident to $V(e) \cap M$ in $F_{t+1}$ can be divided into three groups: those induced by $V(e) \cap M$, those connecting a vertex in $V(e) \cap M$ to a vertex in $V(e) \setminus M$, and those connecting a vertex in $V(e) \cap M$ to either $x$ or $y$. In total, we have
\[
\binom{m}{2} + m(r-2-m) + 2m = \binom{m}{2} + m(r-m)
\]
edges incident to $V(e) \cap M$ in $F_{t+1}$. All these edges are clearly counted by $e_{t+1}(L_{t} \cup M, F_{t+1})$, but not by $e_{t+1}(L_{t} \cup M \setminus V(e), F_{t+1})$ because $x, y \notin L_t$ and $(L_{t} \cup M \setminus V(e)) \cap (V(e) \cap M) = \emptyset$. Thus,
	\begin{align*}
		\frac{e_{t+1}(L_{t} \cup M, F_{t+1})}{|L_{t} \cup M|}
		& = \frac{e_{t+1}(L_{t} \cup M \setminus V(e), F_{t+1}) + \binom{m}{2} + m(r-m)}{|L_{t} \cup M \setminus V(e)| + m}\\
		& = \frac{e_{t+1}(L_{t} \cup M \setminus V(e), F_{t+1})}{|L_{t} \cup M \setminus V(e)|} \cdot \frac{|L_{t} \cup M \setminus V(e)|}{|L_{t} \cup M \setminus V(e)|+m} \\
		& \qquad + \frac{\binom{m}{2} + m(r-m)}{m} \frac{m}{|L_{t} \cup M \setminus V(e)| + m}\\
		& = \frac{e_{t+1}(L_{t} \cup M \setminus V(e), F_{t+1})}{|L_{t} \cup M \setminus V(e)|} \cdot \frac{|L_{t} \cup M \setminus V(e)|}{|L_{t} \cup M \setminus V(e)|+m} \\
		& \qquad + \left ( r-\frac{m+1}{2} \right ) \frac{m}{|L_{t} \cup M \setminus V(e)| + m} \\
		& \geq \frac{e_{t+1}(L_{t} \cup M \setminus V(e), F_{t+1})}{|L_{t} \cup M \setminus V(e)|} \cdot \frac{|L_{t} \cup M \setminus V(e)|}{|L_{t} \cup M \setminus V(e)|+m} \\
		& \qquad + \frac{r+1}{2} \frac{m}{|L_{t} \cup M \setminus V(e)| + m}.
	\end{align*}
	
	If $\frac{e_{t+1}(L_{t} \cup M, F_{t+1})}{|L_{t} \cup M|} \geq \frac{r+1}{2}$ then the claim holds immediately by Lemma \ref{lem:minRatioUpper}. Otherwise we have that
 \[
\frac{e_{t+1}(L_{t} \cup M, F_{t+1})}{|L_{t} \cup M|} > \frac{e_{t+1}(L_{t} \cup M \setminus V(e), F_{t+1})}{|L_{t} \cup M \setminus V(e)|}.
 \]
 This completes the proof.
\end{proof}

Recall that by Lemma \ref{lem:L_tEmpty}, any set $L_{t} \cup M$ that minimises the ratio $\frac{e_{t+1}(L_{t} \cup M, F_{t+1})}{|L_{t} \cup M|}$ has $L_t \neq \emptyset$. Furthermore, Lemma \ref{claim:empty-blobs} tells us that any set of the form $L_{t} \cup M$ with $L_t \neq \emptyset$ that minimises the ratio $\frac{e_{t+1}(L_{t} \cup M, F_{t+1})}{|L_{t} \cup M|}$ has $|V(e) \cap M| = 0$ for every edge $e \in E_t(L_t^c, L_t^c)$. Let us fix $L_t \subseteq V(F_t) \setminus \{1,2\}$ and take $M$ to be maximal such that $L_{t} \cup M$ minimises the edge density.

Assume first that $L_t \subsetneq V(F_t) \setminus \{1,2\}$ with $L_t \neq \emptyset$. By Lemma \ref{claim:full-blobs} we see that we then have $|M| = (r-2)e_t(L_t, F_t)$. Also, let $e$ be an arbitrary edge incident to a vertex $x \in L_t$ in $F_{t+1}$. Again by Lemma \ref{claim:full-blobs} and by the maximality of $M$ we see that $e$ connects $x$ to a vertex in $M$: since we have $x \in L_t$, for every edge $f$ adjacent to $x$ in $F_t$ we set $V(f) \subseteq M$, and then $e$ connects $x$ to a vertex in some $V(f)$. Consequently, all edges incident to $L_{t} \cup M$ in $F_{t+1}$ are incident to $M$, and since $M$ is a union of $e_t(L_t, F_t)$ disjoint cliques on $r-2$ vertices each, such that every vertex in $M$ has exactly two neighbours outside $M$, we have
\begin{equation} \label{eq:smallL_t}
 \frac{e_{t+1}(L_{t} \cup M, F_{t+1})}{|L_{t} \cup M|} = \frac{e_t(L_t, F_t) \left ( \binom{r-2}{2}+2(r-2) \right )}{|L_t|+(r-2)e_t(L_t, F_t)} = \frac{e_t(L_t, F_t) \left ( \binom{r}{2}-1 \right )}{|L_t|+(r-2)e_t(L_t, F_t)}.
\end{equation}

Note that when $L_t = V(F_t) \setminus \{1,2\}$ this choice of $M$ would result in having $ L_{t} \cup M = V(F_{t+1}) \setminus \{1,2\}$, i.e., the edge density with $L_t = V(F_t) \setminus \{1, 2\}$ is minimised among all choices of $M$ by taking the whole graph. As we want to minimise among all possible proper subsets of $V(F_{t+1})$, the case $L_t = V(F_t) \setminus \{1,2\}$ requires some further consideration.

We first consider edges $e \in E(F_t)$ that are incident to a vertex in $\{1,2\}$. For these edges, in the following lemma we show that if, in $F_{t+1}$, at least one vertex of $V(e)$ is in $M$, then the edge density does not go up if we move all but one vertex of $V(e)$ to $M$.

\begin{lemma}\label{claim:all-but-one-blobs-1}
 Let $t \geq 1$, $L_{t} = V(F_t) \setminus \{1,2\}$ and $ M \subsetneq V(F_{t+1}) \setminus V(F_t) $. Let $e = \{x,y\} \in E(F_{t})$ be such that $x \in L_{t}$, $y \in \{1,2\}$, and let $1 \leq |V(e) \cap M| \leq r-3$ with $w \in V(e) \setminus M$. We have
 \[
  \frac{e_{t+1}(L_{t} \cup M, F_{t+1})}{|L_{t} \cup M|} \geq \frac{e_{t+1}(L_{t} \cup M \cup (V(e)\setminus \{w\}), F_{t+1})}{|L_{t} \cup M \cup V(e) \setminus\{w\}|}.
 \]
\end{lemma}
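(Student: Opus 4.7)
The plan is to follow the pattern of Lemmas~\ref{claim:full-blobs} and \ref{claim:empty-blobs}: carry out a direct edge-count of exactly what is added to the numerator and denominator of the ratio when one enlarges $L_t \cup M$ by the vertices in $V(e) \setminus (M \cup \{w\})$, and then reduce the desired inequality to something handled by Lemma~\ref{lem:degree}.

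Set $m = |V(e) \cap M|$ and $k = |V(e) \setminus (M \cup \{w\})| = r - 3 - m$. If $k = 0$, then $V(e) \setminus \{w\}$ already lies inside $M$ and both sides of the claimed inequality coincide, so one may assume $1 \leq k \leq r - 4$. The main bookkeeping step is to determine which edges become newly counted when these $k$ new vertices are adjoined. Each such new vertex $v$ has neighbours $\{x,y\} \cup (V(e) \setminus \{v\})$ in $F_{t+1}$; the edges from $v$ to $x \in L_t$ and to the vertices of $V(e) \cap M \subseteq M$ are already present in the count. The genuinely new edges from $v$ go only to $y \in \{1,2\}$, to $w$, and to the other $k-1$ new vertices, because $y$, $w$ and those remaining new vertices all lie outside $L_t \cup M$. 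Summing over $v$ and correcting for double-counting of the internal edges yields $2k + \binom{k}{2}$ newly-counted edges in total.

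Writing $A = e_{t+1}(L_t \cup M, F_{t+1})$ and $B = |L_t \cup M|$, the inequality to be proved is
\[
\frac{A}{B} \;\geq\; \frac{A + 2k + \binom{k}{2}}{B + k},
\]
which by a routine rearrangement reduces to $\frac{A}{B} \geq \frac{k+3}{2}$. Since $k \leq r-4$, the right-hand side is at most $\frac{r-1}{2}$, and this lower bound on $\frac{A}{B}$ is exactly what Lemma~\ref{lem:degree} provides when applied to $F_{t+1}$ (legitimate since $t + 1 \geq 2$).

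The only genuinely delicate step is the edge count: in contrast to the fully symmetric setting of Lemma~\ref{claim:full-blobs}, here we are forced to leave one vertex $w$ of $V(e)$ outside the enlarged set (so as to keep the resulting set a proper subset of $V(F_{t+1}) \setminus \{1,2\}$), and this asymmetry is responsible for the $k$ extra edges going from the new vertices to $w$. Keeping this accounting correct, and being careful that $y \in \{1,2\}$ is neither in $L_t$ nor in $M$ so that the edges to $y$ are also genuinely new, is what distinguishes the argument from the preceding lemmas; once this is in hand, the inequality is immediate from Lemma~\ref{lem:degree}.
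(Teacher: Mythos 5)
Your proof is correct and takes essentially the same approach as the paper: you carry out the identical edge count (your $k$ is the paper's $m-1$, and $2k + \binom{k}{2}$ matches the paper's $\binom{m-1}{2} + 2(m-1)$), reduce the ratio inequality to the density bound $\frac{A}{B} \geq \frac{k+3}{2} \leq \frac{r-1}{2}$, and close with Lemma~\ref{lem:degree}. The only difference is cosmetic: the paper phrases the reduction as a convex combination of $\frac{A}{B}$ and $\frac{m+2}{2}$ rather than via cross-multiplication.
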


\begin{proof}
We prove this lemma analogously to the second part of the proof of Lemma \ref{claim:full-blobs}. Set $m = |V(e) \setminus M| \leq r-3$. 

We have $x \in L_{t}$ and $y \notin L_{t}$. In this case, $e_{t+1}(L_{t} \cup M, F_{t+1})$ counts every edge adjacent to a vertex in $V(e) \setminus (M \cup \{w\})$ except for those that are induced by this set, and those that connect a vertex in this set to either $y$ or $w$. Since there are $\binom{m-1}{2}+2(m-1)$ such edges not counted by $e_{t+1}(L_t \cup M, F_{t+1})$, we have
\begin{align*}
	\frac{e_{t+1}(L_{t} \cup M \cup (V(e)\setminus \{w\}), F_{t+1})}{|L_{t} \cup M \cup V(e) \setminus\{w\}|}
	& = \frac{e_{t+1}(L_{t} \cup M, F_{t+1})+\binom{m-1}{2}+2(m-1)}{|L_{t} \cup M|+m-1} \\
	& = \frac{e_{t+1}(L_{t} \cup M, F_{t+1})}{|L_{t} \cup M|} \cdot \frac{|L_{t} \cup M|}{|L_{t} \cup M|+m-1} \\
	& \qquad + \frac{m+2}{2} \frac{m-1}{|L_{t} \cup M| + m-1}.
\end{align*}
Since we have $m \leq r-3$, we further obtain
\begin{align*}
	\frac{e_{t+1}(L_{t} \cup M \cup (V(e)\setminus \{w\}), F_{t+1})}{|L_{t} \cup M \cup V(e) \setminus\{w\}|}
	& \leq \frac{e_{t+1}(L_{t} \cup M, F_{t+1})}{|L_{t} \cup M|} \cdot \frac{|L_{t} \cup M|}{|L_{t} \cup M|+m-1} \\
	& \qquad + \frac{r-1}{2} \frac{m-1}{|L_{t} \cup M| + m-1} \\
	& \leq \frac{e_{t+1}(L_{t} \cup M, F_{t+1})}{|L_{t} \cup M|} \text{.}	\qquad \text{(by Lemma \ref{lem:degree})}
\end{align*}
\end{proof}

We now turn our attention to edges $e \in E(F_t)$ that are induced by vertices in $L_t$. Let us show that for them, the edge density does not go up if we move all but one vertex of $V(e)$ to $M$, even if we initially have $V(e) \cap M = \emptyset$.

\begin{lemma}\label{claim:all-but-one-blobs-2}
 Let $t \geq 1$, $L_{t} = V(F_t) \setminus \{1,2\}$ with $ M \subsetneq V(F_{t+1}) \setminus V(F_t) $. Let $e = \{x,y\} \in E(F_{t})$ be such that $x ,y \in L_{t}$, and let $|V(e) \cap M| \leq r-3$ with $w \in V(e) \setminus M$. We have
 \[
  \frac{e_{t+1}(L_{t} \cup M, F_{t+1})}{|L_{t} \cup M|} \geq \frac{e_{t+1}(L_{t} \cup M \cup (V(e)\setminus \{w\}), F_{t+1})}{|L_{t} \cup M \cup V(e) \setminus\{w\}|}.
 \]
\end{lemma}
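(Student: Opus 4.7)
The plan is to follow the same structure as in the proof of Lemma \ref{claim:all-but-one-blobs-1}, adapted for the case when both endpoints $x,y$ of $e$ lie in $L_t$. First set $m = |V(e) \setminus M|$, noting that $1 \leq m \leq r-2$. The case $m=1$ is immediate: then $V(e) \setminus \{w\} \subseteq M$, so both sides of the claimed inequality coincide. Hence I assume $m \geq 2$ from now on.

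Next I would carefully count how many edges are added when passing from $L_t \cup M$ to $L_t \cup M \cup (V(e) \setminus \{w\})$. The newly included vertices form $V(e) \setminus (M \cup \{w\})$, of size $m-1$. Recall that in $F_{t+1}$ the set $V(e) \cup \{x,y\}$ spans a copy of $K_r$ with only the edge $\{x,y\}$ missing, so every new vertex is adjacent precisely to $x$, to $y$, to the vertices of $V(e) \cap M$, to $w$, and to the other vertices of $V(e) \setminus (M \cup \{w\})$. Crucially, because $x, y \in L_t$ (unlike in Lemma \ref{claim:all-but-one-blobs-1}, where $y \notin L_t$) and because $V(e) \cap M \subseteq M$, all edges from a new vertex to $x$, $y$, or $V(e) \cap M$ are already counted by $e_{t+1}(L_t \cup M, F_{t+1})$. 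Thus the only newly counted edges are the $\binom{m-1}{2}$ edges internal to $V(e) \setminus (M \cup \{w\})$ together with the $m-1$ edges joining those vertices to $w$, giving $\binom{m-1}{2} + (m-1) = \frac{(m-1)m}{2}$ new edges.

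With this count in hand, I would express the new edge density as a convex combination exactly as in Lemma \ref{claim:all-but-one-blobs-1}:
\[
\frac{e_{t+1}(L_t \cup M \cup V(e)\setminus\{w\}, F_{t+1})}{|L_t \cup M \cup V(e)\setminus\{w\}|} = \frac{e_{t+1}(L_t \cup M, F_{t+1})}{|L_t \cup M|}\cdot \frac{|L_t \cup M|}{|L_t \cup M|+m-1} + \frac{m}{2}\cdot \frac{m-1}{|L_t \cup M|+m-1}.
\]
Since $m \leq r-2$ we have $\frac{m}{2} \leq \frac{r-2}{2} < \frac{r-1}{2}$, and Lemma \ref{lem:degree} gives $\frac{e_{t+1}(L_t \cup M, F_{t+1})}{|L_t \cup M|} \geq \frac{r-1}{2}$. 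Therefore the convex combination is bounded above by $\frac{e_{t+1}(L_t \cup M, F_{t+1})}{|L_t \cup M|}$, which is precisely the claimed inequality.

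I do not foresee a real obstacle here; this is essentially a careful edge-count argument parallel to Lemma \ref{claim:all-but-one-blobs-1}. The only slightly delicate point is that we save $m-1$ edges relative to the previous lemma (namely those from the new vertices to $y$, which are now already counted because $y \in L_t$), so the relevant coefficient drops from $\frac{m+2}{2}$ to $\frac{m}{2}$. Correspondingly, although $m$ is now allowed to reach $r-2$ rather than $r-3$, the bound $\frac{m}{2} \leq \frac{r-2}{2}$ is still strictly less than $\frac{r-1}{2}$, so Lemma \ref{lem:degree} still yields the required comparison.
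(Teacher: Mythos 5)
Your proof is correct and follows essentially the same approach as the paper: count the newly included edges as $\binom{m-1}{2}+(m-1)=\frac{m(m-1)}{2}$, rewrite the new edge density as a convex combination of the old density and $\frac{m}{2}$, and apply Lemma~\ref{lem:degree} together with $m\leq r-2$ to conclude. The only cosmetic difference is that you explicitly isolate the trivial case $m=1$, which the paper absorbs silently into the computation.
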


\begin{proof}
The proof of this lemma is analogous to the first part of the proof of Lemma \ref{claim:full-blobs}. Set $m = |V(e) \setminus M| \leq r-2$. 

We have $x, y \in L_{t}$. In this case, $e_{t+1}(L_{t} \cup M, F_{t+1})$ counts every edge incident to a vertex in $V(e) \setminus (M \cup \{w\})$ except for those that are induced by this set or else connect a vertex in this set to $w$. Since there are $\binom{m-1}{2}+m-1$ such edges, we have
\begin{align*}
	\frac{e_{t+1}(L_{t} \cup M \cup (V(e)\setminus \{w\}), F_{t+1})}{|L_{t} \cup M \cup V(e) \setminus\{w\}|}
	& = \frac{e_{t+1}(L_{t} \cup M, F_{t+1})+\binom{m-1}{2}+m-1}{|L_{t} \cup M|+m-1} \\
	& = \frac{e_{t+1}(L_{t} \cup M, F_{t+1})}{|L_{t} \cup M|} \cdot \frac{|L_{t} \cup M|}{|L_{t} \cup M|+m-1} \\
	& \qquad + \frac{m}{2} \frac{m-1}{|L_{t} \cup M| + m-1} \\
	& \leq \frac{e_{t+1}(L_{t} \cup M, F_{t+1})}{|L_{t} \cup M|} \cdot \frac{|L_{t} \cup M|}{|L_{t} \cup M|+m-1} \\
	& \qquad + \frac{r-2}{2} \frac{m-1}{|L_{t} \cup M| + m-1} \\
	& \leq \frac{e_{t+1}(L_{t} \cup M, F_{t+1})}{|L_{t} \cup M|} \text{.}	\qquad \text{(by Lemma \ref{lem:degree})}
\end{align*}
\end{proof}

Consider now the implications of Lemmas \ref{claim:all-but-one-blobs-1} and \ref{claim:all-but-one-blobs-2} for the case that $L_t = V(F_t) \setminus \{1,2\}$ (note that in this case for all $e \in E(F_t)$ we have $e \in E_t(L_t, F_t)$). Let us again take $M$ to be maximal such that $L_{t} \cup M$ minimises the ratio $\frac{e_{t+1}(L_{t} \cup M, F_{t+1})}{|L_{t} \cup M|}$.

Since $M$ is maximal, we use Lemma \ref{claim:full-blobs} to show that there is exactly one edge $e \in E(F_t)$ such that $|V(e) \cap M| < r-2$. We then use Lemmas \ref{claim:all-but-one-blobs-1} and \ref{claim:all-but-one-blobs-2}. If $e$ is incident to the vertex $1$ (or the vertex $2$), then we have either $|V(e) \cap M| = 0$ or $|V(e) \cap M| = r-3$. If it is not incident to neither $1$ nor $2$ then we have $|V(e) \cap M| = r-3$.

In the former case, the vertices in $V(e) \cup \{1\}$ (or $V(e) \cup \{2\}$, respectively) induce a clique on $r-1$ vertices. The edges of this clique are the only ones not counted by $e_{t+1}(L_{t} \cup M, F_{t+1})$. Since $V(F_{t+1}) \setminus (L_t \cup M) = \{1,2\} \cup V(e)$, this implies that we have
\begin{equation} \label{eq:hugeL_t-1}
 \frac{e_{t+1}(L_{t} \cup M, F_{t+1})}{L_{t} \cup M} \geq \frac{e_{t+1}-\binom{r-1}{2}}{v_{t+1} - r}.
\end{equation}

In the latter case, if $|V(e) \cap M| = r-3$, let $\{w\} = V(e) \setminus M$. Since $V(F_{t+1}) \setminus (L_t \cup M) = \{1,2,w\}$, $w$ can have at most one neighbour, i.e., either vertex $1$ or $2$, not in $L_t \cup M$. This implies that we have
\begin{equation} \label{eq:hugeL_t-2}
 \frac{e_{t+1}(L_{t} \cup M, F_{t+1})}{L_{t} \cup M} \geq \frac{e_{t+1}-1}{v_{t+1} - 3}.
\end{equation}

We are now ready to prove Theorem \ref{thm:epsilon_tLower}.

\begin{proof}[\textit{Proof of Theorem \ref{thm:epsilon_tLower}}]
	The proof proceeds by induction on $t$. Recall the definition of $\varepsilon_t$ given in equation \eqref{eq:eps}.  Our induction hypothesis is the inequality
	\[
	 \varepsilon_{t} \geq \frac{1}{r+1} \left ( \frac{2}{r^2-2} \right )^{t-1}
	\]
 (see inequality \eqref{eq:epsilon_t}). We have already seen in the proof of Lemma \ref{lem:epsilon_tUpper} that $\varepsilon_1 = \frac{1}{r+1}$ so the claim holds for $t=1$. Thus assume the statement is true for some $ t \geq 1$. We now proceed with establishing a recursive lower bound on $e_{t+1}(L_{t+1}, F_{t+1})/|L_{t+1}|$. As before, we consider sets of the form $L_{t+1} = L_{t} \cup M $, where $ L_{t} \subseteq V(F_{t}) \setminus \{1,2\} $ and $ M \subseteq V(F_{t+1}) \setminus V(F_{t}) $, and we write $\ell = |L_{t}|$.  Having established a lower bound on $e_{t+1}(L_{t+1}, F_{t+1})/|L_{t+1}|$ we use \eqref{eq:eps} to bound $\varepsilon_t$ from below. We analyse three cases, depending on the structure of $L_t$ and $M$, and we show that the first of these cases gives the weakest bound on $\varepsilon_t$ from which the theorem follows.
	
	Thus, if there exists a set $L_{t} \cup M $ that minimises the ratio $\frac{e_{t+1}(L_t \cup M, F_{t+1})}{|L_t \cup M|}$ for which we have $L_t \neq V(F_t) \setminus \{1,2\}$ then by \eqref{eq:smallL_t}, we have
	\begin{equation} \label{eq:epsilon_tLower_firstBound}
	 \frac{e_{t+1}(L_t \cup M, F_{t+1})}{\ell + |M|} \geq \frac{\left(\binom{r}{2}-1 \right) e_t(L_t, F_t)}{\ell + (r-2) e_t(L_t, F_t)}= \frac{\frac{(r-2)(r+1)}{2} e_t(L_t, F_t)}{\ell + (r-2) e_t(L_t, F_t)}.
	\end{equation}
	The right-hand side of inequality \eqref{eq:epsilon_tLower_firstBound} is a function of the type $\frac{ax}{b+cx}$ with $a,b,c > 0$. Hence it is increasing in $x$ and therefore we can use relation \eqref{eq:eps} to bound it from below. Thus we have
	\begin{align*}
		\frac{e_{t+1}(L_t \cup M, F_{t+1})}{\ell + |M|}
		& \geq \frac{\frac{(r-2)(r+1)}{2} \ell (1+\varepsilon_t) \frac{e_t}{v_t-2}}{\ell + (r-2) \ell (1+\varepsilon_t) \frac{e_t}{v_t-2}}\\
		& \geq \frac{\frac{(r-2)(r+1)}{2} \ell \lambda (1+c_t)(1+\varepsilon_t)}{\ell + (r-2)\ell \lambda(1+c_t)(1+\varepsilon_t)} 
			&&\text{(by \eqref{eq:crucialRatio})}\\
		& = \lambda (1+c_{t+1}) \frac{\frac{(r-2)(r+1)}{2}\frac{(1+c_t)}{(1+c_{t+1})} (1+\varepsilon_t)}{1+ (r-2)\lambda (1+c_t)(1+\varepsilon_t)}.
	\end{align*} 

	Hence, by \eqref{eq:eps} we see that $\varepsilon_{t+1}$ can be bounded from below by
	\begin{align}
		\varepsilon_{t+1} & \geq \frac{\frac{(r-2)(r+1)}{2}\frac{(1+c_t)}{(1+c_{t+1})} (1+\varepsilon_t)}{1+ (r-2)\lambda (1+c_t)(1+\varepsilon_t)} - 1 \notag\\
		&=\frac{(1+c_t)(1+\varepsilon_t)\left(\frac{(r-2)(r+1)}{2}\frac{1}{1+c_{t+1}} - (r-2)\lambda \right) - 1}{1+(r-2)\lambda(1+c_t)(1+\varepsilon_t)} \notag\\
		&=\frac{(1+c_t)(1+\varepsilon_t)\left(\frac{\tau}{1+c_{t+1}} -\tau+1 \right)-1}{1 + (r-2)\lambda(1+c_t)(1+\varepsilon_t)} && \text{(by \eqref{eq:tau} and \eqref{eq:tauAndLambda})}\notag\\
		&=\frac{(1+c_t)(1+\varepsilon_t)\left(1 - \frac{1}{\tau^t} \right)-1}{1 + (r-2)\lambda(1+c_t)(1+\varepsilon_t)} && \text{(by \eqref{eq:c_t})}\notag\\
		&=\frac{(1+c_t)(1+\varepsilon_t)\frac{1}{1+c_t}-1}{1 + (r-2)\lambda(1+c_t)(1+\varepsilon_t)} && \text{(by \eqref{eq:c_t})}\notag\\
		&=\frac{\varepsilon_t}{1 + (r-2)\lambda(1+c_t)(1+\varepsilon_t)}. \label{eq:eps-t+1-pos1}
	\end{align}

	Using the bound $\varepsilon_t \leq 1/(r+1)$ in Lemma \ref{lem:epsilon_tUpper}, we can bound the expression in \eqref{eq:eps-t+1-pos1} from below by
	\begin{align*}
		\varepsilon_{t+1} & \geq \frac{\varepsilon_t}{1 + (r-2)\lambda(1+c_t)\frac{r+2}{r+1}} \\
		& = \frac{\varepsilon_t}{1 + (r-2)\lambda\frac{\tau^t}{\tau^t-1}\frac{r+2}{r+1}} && \text{(by \eqref{eq:c_t})} \\
		& = \frac{\varepsilon_t}{1 + (\tau - 1)\frac{\tau^t}{\tau^t-1}\frac{r+2}{r+1}}  && \text{(by \eqref{eq:tauAndLambda})}\\
		& \geq \frac{\varepsilon_t}{1 + \tau \frac{r+2}{r+1}} \\
		& = \frac{\varepsilon_t}{1 + \frac{(r+2)(r-2)}{2}}  && \text{(by \eqref{eq:tau})}\\
		& = \frac{2}{r^2-2} \varepsilon_t.
	\end{align*}

	Thus in this case, by the induction hypothesis, we obtain a lower bound on $\varepsilon_{t+1}$ given by
	\begin{align*}
		\varepsilon_{t+1} \geq \frac{2}{r^2-2} \varepsilon_t \geq  \frac{1}{r+1} \left ( \frac{2}{r^2-2} \right )^t.
	\end{align*}
	
	If on the other hand there exists a set $L_t \cup M$ with $L_t = V(F_t) \setminus \{1,2\}$ that gives the minimal ratio in equation \eqref{eq:eps} then we shall consider two cases as outlined in the discussion before this proof. If $M = V(F_{t+1}) \setminus (V(F_t) \cup V(e))$ for some $e \in E(F_t)$ then by \eqref{eq:hugeL_t-1} we have
	\begin{align*}
		\frac{e_{t+1}(L_t \cup M, F_{t+1})}{|L_t \cup M|}
		& \geq \frac{e_{t+1}-\binom{r-1}{2}}{v_{t+1}-r} &&\text{(by \eqref{eq:hugeL_t-1})}\\
		&=\frac{\tau^{t+1}-\binom{r-1}{2}}{(r-2)\frac{(\tau^{t+1}-1)}{\tau-1} - (r-2)} && \text{(by \eqref{eq:e_t} and \eqref{eq:v_t})}\\
		& = \frac{e_{t+1}}{v_{t+1}-2}\left(\frac{1 - \frac{r^2-3r+2}{2\tau^{t+1}}}{1 - \frac{(r-2)(\tau-1)}{(r-2)(\tau^{t+1}-1)}} \right).
	\end{align*}
	As before, by \eqref{eq:eps} we see that in this case $\varepsilon_{t+1}$ is bounded from below by
	\begin{align*}
		\varepsilon_{t+1} & \geq \frac{1 - \frac{r^2-3r+2}{2\tau^{t+1}}}{1 - \frac{\tau-1}{\tau^{t+1}-1}} - 1 \\
		& = \frac{\frac{\tau-1}{\tau^{t+1}-1} - \frac{r^2-3r+2}{2\tau^{t+1}}}{1 - \frac{\tau-1}{\tau^{t+1}-1}} \\
		& = \frac{\tau^{t+2} - \tau^{t+1} - (\tau^{t+1}-1)(\tau-r+2)}{\tau^{t+1}(\tau^{t+1}-1)} \cdot \frac{1}{1 - \frac{\tau-1}{\tau^{t+1}-1}} && \text{(by \eqref{eq:tau})}\\
		& > \frac{(r-3)\tau^{t+1}+(r-2)(\frac{r+1}{2}-1)}{\tau^{t+1}(\tau^{t+1}-1)} && \text{(by \eqref{eq:tau})} \\
		& > \frac{(r-3) (\tau^{t+1}+1)}{\tau^{t+1}(\tau^{t+1}-1)} \\
		& > \frac{r-3}{\tau^{t+1}}.
	\end{align*}
	 The last inequality is clearly weaker than it could be and is given in this form to make further calculations simpler.
	
	If $L_t = V(F_t) \setminus \{1,2\}$ and $M = V(F_{t+1}) \setminus (V(F_t) \cup \{w\})$ for some $w \in V(F_{t+1}) \setminus V(F_t)$ then by \eqref{eq:hugeL_t-2} we have
	\begin{align*}
		\frac{e_{t+1}(L_t \cup M, F_{t+1})}{|L_t \cup M|}
		& \geq \frac{e_{t+1}-1}{v_{t+1}-3} &&\text{(by \eqref{eq:hugeL_t-2})}\\
		&=\frac{\tau^{t+1}-1}{(r-2)\frac{(\tau^{t+1}-1)}{\tau-1} - 1} && \text{(by \eqref{eq:e_t} and \eqref{eq:v_t})}\\
		& = \frac{e_{t+1}}{v_{t+1}-2}\left(\frac{1 - \frac{1}{\tau^{t+1}}}{1 - \frac{\tau-1}{(r-2)(\tau^{t+1}-1)}} \right). && \text{(by \eqref{eq:v_t-2})}
	\end{align*}
	Thus, we see that by \eqref{eq:eps} in this case $\varepsilon_{t+1}$ is at least
	\begin{align*}
		\varepsilon_{t+1} & \geq \frac{1 - \frac{1}{\tau^{t+1}}}{1 - \frac{\tau-1}{(r-2)(\tau^{t+1}-1)}} - 1 \\
		& = \frac{\frac{\tau-1}{(r-2)(\tau^{t+1}-1)} - \frac{1}{\tau^{t+1}}}{1 - \frac{\tau-1}{(r-2)(\tau^{t+1}-1)}} \\
		& = \frac{\tau^{t+2} - (r-1)\tau^{t+1} + (r-2)}{\tau^{t+1}(r-2)(\tau^{t+1}-1)} \cdot \frac{1}{1 - \frac{(\tau-1)}{(r-2)(\tau^{t+1}-1)}} \\
		& > \frac{\tau^{t+2} - (r-1)\tau^{t+1}}{\tau^{t+1}(r-2)(\tau^{t+1}-1)} \\
		& > \frac{\tau - (r-1)}{(r-2) \tau^{t+1}} \\
		& = \frac{1}{\tau^{t+1}} \left ( \frac{r+1}{2} - \frac{r-1}{r-2} \right ) && \text{(by \eqref{eq:tau})} \\
		& = \frac{1}{\tau^{t+1}} \left ( \frac{r+1}{2} - \frac{1}{r-2} - 1 \right ) \\
		& = \frac{\lambda - 1}{\tau^{t+1}} && \text{(by \eqref{eq:lambda})}. 
	\end{align*}
	
	It remains to show that for all $t \geq 0$ we have 
	\begin{equation}\label{eq:eps-est}
	 \frac{1}{r+1} \left ( \frac{2}{r^2-2} \right )^t \leq \min \left \{ \frac{r-3}{\tau^{t+1}}, \frac{\lambda - 1}{\tau^{t+1}} \right \}.
	\end{equation}
	To see that the inequality in \eqref{eq:eps-est} holds, note that since $r \geq 4$,
	\[
	 \frac{1}{r+1} = \frac{r-2}{(r+1)(r-2)} \leq \frac{\frac{r+1}{2} - \frac{1}{r-2} - 1}{\frac{(r+1)(r-2)}{2}} = \frac{\lambda - 1}{\tau} \text{,}
	\]
	and
	\[
	 \frac{1}{r+1} \leq \frac{1}{r+1}\frac{2(r-3)}{r-2} = \frac{r-3}{\tau}.
	\]
	Moreover,
	\[
	 \frac{2}{r^2-2} < \frac{2}{r^2-r-2} = \frac{1}{\tau}.
	\]
	Thus, for any $t \geq 0$, the terms in equation \eqref{eq:eps-est} satisfy $\frac{1}{r+1}\left(\frac{2}{r^2-2} \right)^t \leq \frac{\lambda-1}{\tau} \cdot \frac{1}{\tau^t}$ and $\frac{1}{r+1}\left(\frac{2}{r^2-2} \right)^t \leq \frac{r-3}{\tau} \cdot \frac{1}{\tau^t}$ . This completes the induction on $t$ and the proof of Theorem \ref{thm:epsilon_tLower}.
\end{proof}

Let us conclude this section by commenting on the sharpness of the bound in \eqref{eq:epsilon_t}. We know that $\varepsilon_1 = 1/(r+1)$ is obtained by taking $L = L_1$ of size $r-3$, i.e., by leaving just one vertex in $V(F_1)\setminus \{1,2\}$ outside $L_1$. If we then continue by, for $2 \leq i \leq t$, taking $L_i$ to be the union of $L_{i-1}$ and all vertices in $V(F_i) \setminus V(F_{i-1})$ that are adjacent to at least one vertex in $L_{i-1}$, then the resulting set $L_t$ has $1+2(r-2)\frac{\tau^{t-1}-1}{\tau-1}$ vertices and $\tau^{t-1}(\tau-2)$ edges adjacent to it. This shows that for some $C_r > 0$, one can obtain a bound $\varepsilon_t \leq C_r / \tau^{t}$. Since $\tau = (r^2-r-2)/2$, this implies that our lower bound on $\varepsilon_t$ is relatively sharp.

We have thus shown that the edge density of all proper subsets of $V(F_{t})$ is strictly bounded from below by $ e_{t} / (v_{t} - 2) $. As we will see in Section \ref{sec:upperBound}, we are now equipped with the necessary means to prove Statement \eqref{stat:1} of Theorem \ref{thm:criticalProb}.

\section{Upper bound on the critical probability}
\label{sec:upperBound}

In this section we prove Statement \eqref{stat:1} of Theorem \ref{thm:criticalProb}. We shall use the following form of Janson's inequality.
\begin{theorem}
\label{thm:JansonIneq}
 Let $R$ be a set and let $S \subseteq R$ be a random subset of $R$, where each $r \in R$ is in $S$ independently with probability $p$. Let $\{B_1, \ldots, B_m \}$ be a collection of finite subsets of $R$ and let $C_i$ be the event that $B_i \subseteq S$. Let $Z = \sum_{i=1}^m \indicator{C_i}$ and let $\mu = \sum_{i=1}^m \PP_p (C_i) = \EE [Z]$. For $1 \leq i , j \leq m$, $i \neq j$, let $i \sim j$ if $B_i \cap B_j \neq \emptyset$, i.e., if the events $C_i$ and $C_j$ are dependent, and let $\Delta = \sum_{i \sim j} \PP (C_i \cap C_j)$. Then
\begin{equation}
 \label{eq:JansonIneq}
 \PP_p (Z=0) \leq e^{-\mu + \Delta/2 }.
\end{equation}
\end{theorem}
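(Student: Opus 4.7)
The plan is to give the standard proof of Janson's inequality via a telescoping of conditional probabilities combined with the Harris--FKG correlation inequality. First I would write
$$\PP_p(Z = 0) = \PP_p\Bigl(\bigcap_{i=1}^m \overline{C_i}\Bigr) = \prod_{i=1}^m \PP_p(\overline{C_i} \mid D_{i-1}),$$
where $D_{i-1} = \bigcap_{j < i} \overline{C_j}$ (and $D_0$ is the whole probability space), so that the task reduces to bounding each conditional factor from above.

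The key estimate I would aim to prove is, for every $i$,
$$\PP_p(C_i \mid D_{i-1}) \geq \PP_p(C_i) - \sum_{\substack{j < i \\ j \sim i}} \PP_p(C_i \cap C_j).$$
Given this lower bound, the elementary inequality $1 - x \leq e^{-x}$ yields
$$\PP_p(\overline{C_i} \mid D_{i-1}) \leq \exp\Bigl(-\PP_p(C_i) + \sum_{\substack{j < i \\ j \sim i}} \PP_p(C_i \cap C_j)\Bigr),$$
and multiplying over $i$ gives $\PP_p(Z=0) \leq \exp(-\mu + \Delta/2)$, since the ordering convention $j < i$ accounts for exactly half of the symmetric sum defining $\Delta$.

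To establish the conditional lower bound I would split $\{1, \ldots, i-1\} = J \sqcup J^c$ with $J = \{j < i : j \sim i\}$, and set $U = \bigcap_{j \in J} \overline{C_j}$, $V = \bigcap_{j \in J^c} \overline{C_j}$, so $D_{i-1} = U \cap V$. The crucial structural fact is that $V$ depends only on variables outside $B_i$, hence $V$ is genuinely independent of $C_i$. Applying the Harris--FKG inequality to the increasing event $C_i$ and the decreasing events $U, V$ neutralizes the $V$-conditioning and reduces the problem to lower-bounding $\PP_p(C_i \cap U)$, which follows from a one-line inclusion--exclusion:
$$\PP_p(C_i \cap U) = \PP_p(C_i) - \PP_p\Bigl(C_i \cap \bigcup_{j \in J} C_j\Bigr) \geq \PP_p(C_i) - \sum_{j \in J} \PP_p(C_i \cap C_j).$$

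The main subtlety is making the Harris--FKG comparison go in the right direction so that the $V$-conditioning can be discarded without loss while the $U$-conditioning is absorbed into the inclusion--exclusion correction; this is precisely the step that forces one to separate the dependent indices $J$ from the independent indices $J^c$. Everything else is bookkeeping.
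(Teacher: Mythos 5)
The paper does not prove Theorem~\ref{thm:JansonIneq}: it is quoted as a known form of Janson's inequality (the ``\qed'' following the statement signals that no proof is given), so there is no internal proof to compare against. Your argument is essentially the standard Boppana--Spencer proof, and the overall structure---telescoping over the $\overline{C_i}$, splitting $D_{i-1}=U\cap V$ along $J=\{j<i:j\sim i\}$ versus $J^c$, dropping the $V$-conditioning via a correlation inequality, and applying Bonferroni to $\PP(C_i\cap U)$---is correct and yields $\PP(C_i\mid D_{i-1})\ge \PP(C_i)-\sum_{j\in J}\PP(C_i\cap C_j)$, from which $1-x\le e^{-x}$ and the factor-of-two bookkeeping give the stated bound.

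One step is described imprecisely, though the conclusion survives. You say the reduction $\PP(C_i\mid U\cap V)\ge\PP(C_i\cap U)$ follows from ``Harris--FKG applied to the increasing event $C_i$ and the decreasing events $U,V$.'' Applied literally, FKG with $C_i$ increasing and $U\cap V$ decreasing gives the \emph{opposite} inequality $\PP(C_i\mid U\cap V)\le\PP(C_i)$, and $C_i\cap U$ itself is neither increasing nor decreasing, so FKG cannot be applied to it directly. The correct derivation is: first $\PP(C_i\mid U\cap V)\ge\PP(C_i\cap U\cap V)/\PP(V)$ since $\PP(U\cap V)\le\PP(V)$; then observe that on the event $C_i$ one has $U=\bigcap_{j\in J}\{(B_j\setminus B_i)\not\subseteq S\}=:W$, which depends only on coordinates outside $B_i$, so $\PP(C_i\cap U\cap V)=\PP(C_i)\,\PP(W\cap V)$ by independence; finally FKG applied to the two \emph{decreasing} events $W$ and $V$ gives $\PP(W\cap V)\ge\PP(W)\PP(V)$, whence $\PP(C_i\mid U\cap V)\ge\PP(C_i)\PP(W)=\PP(C_i\cap U)$. (Equivalently, one can keep the conditioning on $V$ throughout, use $\PP(C_i\mid V)=\PP(C_i)$ by independence, and apply FKG term-by-term as $\PP(C_i\cap C_j\mid V)\le\PP(C_i\cap C_j)$ for the increasing events $C_i\cap C_j$.) With that correction your sketch is a complete and correct proof.
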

\qed 

Let $e_0 = \{1,2\}$. In this section we show that if $p(n) \geq n^{-\frac{v_t-2}{e_t}} \log n$ then we have $\PP_{p_n} (e_{0} \notin E(G_t)) \leq n^{-3}$. By the union bound this implies $\PP_{p_n} (T \leq t) \to 1$.

\begin{proofOfStatementOfTheorem}{\eqref{stat:1}}{thm:criticalProb}
Fix $n$, sufficiently large, and $t = t(n) \leq \frac{\log\log n}{3\log \tau}$.

As always, fix two vertices $1$ and $2$ and let $e_0 = \{1,2\}$.  Given any of the $\binom{n-2}{v_t-2}$ subsets $X_i \subseteq \{3,4,\ldots,n\}$ of size $(v_t-2)$, let $F_t (X_i)$ be an arbitrary fixed copy of the graph $F_t$ on $X_i \cup \{1,2\}$ that adds the edge $e_{0}$ to the graph $G$ in $t$ time steps. We shall apply Theorem \ref{thm:JansonIneq} to bound the probability of $e_{0}$ not being added to the graph by time $t$ from above. 

Let $p(n) = n^{-(v_t-2)/e_t} \log n$, as in Statement \eqref{stat:1} and let $G = G_{n,p(n)}$. In Theorem \ref{thm:JansonIneq} we shall take $R = [n]^{(2)}$, $S = E(G)$ and for $i=1, \ldots, \binom{n-2}{v_t-2}$ let $B_i = E(F_t(X_i))$. We define $C_i$, as well as $Z$, as in Theorem \ref{thm:JansonIneq}. We clearly have $\mu = \sum_{i=1}^m \PP_p (C_i) = \binom{n-2}{v_t-2} p^{e_t}$.

An upper bound on $\Delta$ in Theorem \ref{thm:JansonIneq} can be obtained by considering the following form,
\[
\Delta =\sum_{i \sim j} \PP(C_i \cap C_j)  =\sum_{1 \leq |X_i \setminus X_j| \leq v_{t-3}} \PP(B_i \subseteq S \text{ and } B_j \subseteq S).
\]
There are $\binom{n-2}{v_t-2}$ ways to choose the set $X_i$ and, having fixed $X_i$, for $1 \leq k \leq v_t-3$ there are $\binom{v_t-2}{v_t-2-k} \binom{n-v_t}{k}$ ways to choose the set $X_j$ such that $|X_j \setminus X_i| = k$. We also have $|B_i| = e_t$ and, by \eqref{eq:eps}, $|B_j \setminus B_i| \geq k(1+\varepsilon_t)\frac{e_t}{v_t-2}$. Hence

\begin{align}
	\Delta &\leq \binom{n-2}{v_t-2} \sum_{k = 1}^{v_t-3} \binom{v_t-2}{v_t-2-k} \binom{n-v_t}{k} p^{e_t + k (1+\varepsilon_t)\frac{e_t}{v_t-2}} \notag\\
	& = \binom{n-2}{v_t-2} p^{e_t} \sum_{k = 1}^{v_t-3} \binom{v_t-2}{k} \binom{n-v_t}{k} p^{k (1+\varepsilon_t)\frac{e_t}{v_t-2}} \notag\\
	& = \mu \sum_{k=1}^{v_t-3} \binom{v_t-2}{k} \binom{n-v_t}{k} p^{k (1+\varepsilon_t)\frac{e_t}{v_t-2}} \notag\\
	& < \mu \sum_{k=1}^{v_t-3} ((v_t-2) n)^k p^{k (1+\varepsilon_t)\frac{e_t}{v_t-2}}. \label{eq:Delta-bd1}
\end{align}
Note that, by the definition of $\varepsilon_t$ in \eqref{eq:eps}, by Lemma \ref{lem:minRatioUpper} and \eqref{eq:lambda},
\begin{equation} \label{eq:exponentUpperBound}
\frac{e_t}{v_t-2}(1 +\varepsilon_t) \leq \frac{r+1}{2} = \lambda + \frac{1}{r-2} < 2 \lambda.
\end{equation}
By \eqref{eq:tauAndLambda}, for all $r \geq 4$ we have $\tau - 1 > r-2$ hence, by \eqref{eq:v_t-2}, $v_t-2 < \tau^t$. For $t \leq \frac{\log \log n}{3 \log \tau} = \frac{\log_\tau (\log n)}{3}$, by \eqref{eq:tauAndLambda} we have
\begin{equation} \label{eq:v_tUpperBound}
 v_t-2 < \tau^{t} \leq (\log n)^{1/3}.
\end{equation}
Now, using the fact that $p(n) =  n^{-(v_t-2)/e_t} \log n$, we have
\begin{align}
\sum_{k=1}^{v_t-3} ((v_t-2) n)^k p^{k \frac{e_t}{v_t-2}(1+\varepsilon_t)}
	& = \sum_{k=1}^{v_t-3} ((v_t-2) n)^k n^{-k(1+\varepsilon_t)} (\log n)^{k \frac{e_t}{v_t-2}(1+\varepsilon_t)} \notag\\
	& \leq \sum_{k=1}^{v_t-3} ((v_t-2) n)^k (\log n)^{2\lambda k} n^{-k(1+\varepsilon_t)} && \text{(by \eqref{eq:exponentUpperBound})} \notag\\
	& \leq \sum_{k=1}^{v_t - 3} (\log n)^{k/3} (\log n)^{2\lambda k} n^{k-k(1+\varepsilon_t)} && \text{(by \eqref{eq:v_tUpperBound})} \notag\\
	& = \sum_{k=1}^{v_t - 3} (\log n)^{k(2\lambda + 1/3)} n^{-k\varepsilon_t}. \label{eq:Delta-bd2}
\end{align}
By inequality \eqref{eq:epsilon_t} in Theorem \ref{thm:epsilon_tLower}, we have $\varepsilon_t \geq \frac{1}{r+1}(2/(r^2-2))^{t-1}$. This implies that for $t \leq \frac{\log\log n}{3\log \tau}$ we have $\varepsilon_t > \left(\log n\right)^{-1/2}$. Indeed,
\[
\varepsilon_t \geq \frac{1}{r+1} \left ( \frac{2}{r^2-2} \right )^{t-1} > \frac{1}{r+1}\left ( \frac{1}{\tau}\frac{2 \tau}{r^2-2} \right )^{\frac{\log\log n}{3\log \tau}} = \frac{1}{r+1} (\log n)^{-1/3} \left (\frac{r^2-r-2}{r^2-2} \right )^{\frac{\log\log n}{3\log \tau}}.
\]
Now, since the sequence $a_r = \frac{r^2-r-2}{r^2-2}$ is increasing in $r$ and for all $r \geq 4$ we have $\tau \geq 5$,
\[
\left (\frac{r^2-r-2}{r^2-2} \right )^{\frac{1}{3\log \tau}} \geq \left (\frac{5}{7} \right )^{\frac{1}{3\log 5}} > 0.93 > e^{-1/10},
\]
and the bound $\varepsilon_t \geq \left(\log n\right)^{-1/2}$ follows for $n$ large enough. Consequently, for $n$ large enough we have that $(\log n)^{2\lambda + 1/3} n^{-\varepsilon_t} \leq (\log n)^{2\lambda + 1/3} \exp(-(\log n)^{1/2}) < 1/2$. Hence continuing the string of inequalities from \eqref{eq:Delta-bd1} and \eqref{eq:Delta-bd2},
\begin{equation} \label{eq:deltaBound}
 \Delta \leq \mu \sum_{k = 1}^{v_t - 3} \left ( \left (\log n \right )^{2\lambda + 1/3} n^{-\varepsilon_t}\right )^k \leq \mu \sum_{k = 1}^{\infty} \left (\frac{1}{2} \right )^k = \mu.
\end{equation}
By \eqref{eq:v_tUpperBound}, we see that the ratio $(\frac{n-2}{n})^{v_t}$ tends to 1 as $n \to \infty$. Thus, for $p \geq n^{-\frac{v_t-2}{e_t}} \log n$ we have
\begin{equation} \label{eq:muBound}
\mu = \binom{n-2}{v_t-2} p^{e_t} \geq \left ( \frac{n}{v_t-2} \right )^{v_t-2} n^{-\frac{v_t-2}{e_t}e_t} ( \log n )^{e_t} = \frac{(\log n)^{e_t}}{(v_t-2)^{v_t-2}} \geq \left(\frac{\log n}{\tau^t}\right)^{\tau^t},
\end{equation}
where the second inequality follows from \eqref{eq:e_t} and the bound on $v_t$ in \eqref{eq:v_tUpperBound}. Hence using Theorem \ref{thm:JansonIneq} we obtain
\begin{align*}
\PP(Z = 0) & \leq \exp\left(-\mu + \Delta/2\right) && \text{(by \eqref{eq:JansonIneq})} \\
	& \leq \exp(-\mu/2) && \text{(by \eqref{eq:deltaBound})}\\
	& \leq \exp\left(-\frac{1}{2} \left(\frac{\log n}{\tau^t} \right)^{\tau^t} \right). && \text{(by \eqref{eq:muBound})}
\end{align*}

Note that the function $x \mapsto \left(\frac{\log n}{x}\right)^x$ is increasing for $x \in (0,\frac{\log n}{e} ]$.  When $t = 1$, for $n$ sufficiently large,
\[
\left(\frac{\log n}{\tau}\right)^{\tau} = \log n \frac{(\log n)^{\tau -1}}{\tau^\tau} \geq 6 \log n.
\] 

For $t \leq \frac{\log \log n}{3\log \tau}$ we have $\tau \leq \tau^t \leq (\log n)^{1/3} < \frac{\log n}{e}$ and so, 
\[
\left(\frac{\log n}{\tau^t} \right)^{\tau^t}  \geq \left(\frac{\log n}{\tau}\right)^{\tau} \geq 6 \log n
\]
when $n$ is sufficiently large. Thus,
\[
\PP(Z = 0) \leq \exp\left(-\frac{1}{2} \left(\frac{\log n}{\tau^t} \right)^{\tau^t} \right) \leq \exp(-3 \log n) = n^{-3},
\]
and applying the union bound yields
\[
 \PP_{p_n} (T \leq t) \geq 1 - \frac{1}{n}.
\]
This completes the proof of Statement \eqref{stat:1} of Theorem \ref{thm:criticalProb}.
\end{proofOfStatementOfTheorem}

\section{Lower bound on the critical probability}
\label{sec:lowerBound}

In this section we prove Statement \eqref{stat:2} of Theorem \ref{thm:criticalProb}. More precisely, we show that if $p(n) = o \left ( n^{-\frac{v_t-2}{e_t}} \right )$ then even a single fixed pair $e_{0} = \{1,2\}$ is not added to the graph by time $t$ with high probability.

\begin{proofOfStatementOfTheorem}{\eqref{stat:2}}{thm:criticalProb}
Recall that $E(G_t)$ denotes the edges of the graph after $t$ time steps. We have
\[
\PP_{p_n} (T \leq t) \leq \PP_{p_n} (e_{0} \in E(G_t)) \leq \sum_{i=0}^t \PP_{p_n} (e_{0} \text{ is added at time } i).
\]
The probability that $e_0$ is added to the graph at time $0$ is clearly $p_n = o(1)$. For $1 \leq i \leq t-1$ we can be very generous with our estimates. The number of vertices of any minimal graph that adds $e_0$ to the graph at time $i \geq 1$ is at least $r$ (including $1$ and $2$) and, by Lemma \ref{lem:minimalsSmall}, at most $v_i$. The number of different graphs on a set of $j$ vertices is $2^{\binom{j}{2}} < 2^{j^2}$. Finally, by Corollary \ref{cor:manyEdges} any minimal graph on $j$ vertices that adds $e_0$ to the graph contains at least $\lambda (j-2) + 1$ edges. Since we take $p(n) = o \left ( n^{-\frac{v_t-2}{e_t}} \right )$, we see that by the union bound the probability that $e_0$ is added to the graph at some time $1 \leq i \leq t-1$ is at most
\begin{align}
 \sum_{i=1}^{t-1}\sum_{j = r}^{v_i} \binom{n}{j-2} 2^{\binom{j}{2}} p^{\lambda(j-2)+1} 
 	&\leq \sum_{i=1}^{t-1} \sum_{j=r}^{v_i} n^{j-2} 2^{j^2} n^{-(\lambda (j-2) + 1)\frac{v_t-2}{e_t}} \notag\\
	& = \sum_{i=1}^{t-1} \sum_{j=r-2}^{v_i-2} n^j 2^{(j+2)^2} n^{-(\lambda j + 1)\frac{v_t-2}{e_t}}.  \label{eq:tooSoon}
\end{align}
For $i=t$ we further divide into two cases. We deal with the graphs on at most $v_t-1$ vertices in the same way as we did for $i<t$. By union bound, the probability that any one of them appears in the graph is at most
\begin{equation} \label{eq:tooSmall}
 \sum_{j=r-2}^{v_t-3} n^j 2^{(j+2)^2} n^{-(\lambda j + 1)\frac{v_t-2}{e_t}}.
\end{equation}
The last case we need to consider are graphs on $v_t$ vertices that add $e_0$ to the graph at time $t$. By Corollary \ref{cor:largestIsomorphic} there are at most $n^{v_t-2}$ such graphs and the probability that we obtain at least one of them, for some $\omega(n)$ that tends to infinity as $n \to \infty$, is at most
\[
 n^{v_t-2} n^{-e_t \frac{v_t-2}{e_t}} / \omega(n)^{e_t} = \omega(n)^{-e_t} = o(1).
\]
Consequently we have
\begin{equation}
\label{eq:lowerBound}
\PP_{p_n} (T \leq t) \leq \sum_{i=1}^{t-1} \sum_{j=r-2}^{v_i-2} n^j 2^{(j+2)^2} n^{-(\lambda j + 1)\frac{v_t-2}{e_t}} + \sum_{j=r-2}^{v_t-3} n^j 2^{(j+2)^2} n^{-(\lambda j + 1)\frac{v_t-2}{e_t}} + o(1).
\end{equation}
For all $t \geq 1$, by \eqref{eq:v_t-2} we have
\[
 v_{t-1}-2 = (r-2) \frac{\tau^{t-1}-1}{\tau-1} < \frac{r-2}{\tau} \frac{\tau^{t}-1}{\tau-1} = \frac{v_{t}-2}{\tau}.
\]
It follows that for $i \leq t-1$ we can bound the powers of $n$ in \eqref{eq:tooSoon} by
\begin{align*}
 j - (\lambda j + 1)\frac{(v_t-2)}{e_t}
    & = j - j \frac{\lambda (v_t-2)}{e_t} - \frac{v_t-2}{e_t}\\
    & = j \left(1 - \frac{1}{1+c_t} \right) - \frac{1}{\lambda(1+c_t)} && \text{(by \eqref{eq:crucialRatio})}\\
    & = \left(j c_t - \frac{1}{\lambda} \right) \frac{1}{1+c_t}\\
    & \leq \left((v_{t-1}-2) c_t - \frac{1}{\lambda} \right)\frac{1}{1+c_t}\\
    & = \left(\frac{r-2}{\tau-1}(\tau^{t-1} - 1) \frac{1}{\tau^t-1} - \frac{r-2}{\tau-1} \right)\frac{1}{1+ \frac{1}{\tau^t-1}} && \text{(by \eqref{eq:v_t-2}, \eqref{eq:c_t} and \eqref{eq:tauAndLambda})}\\
    & = \frac{r-2}{\tau-1} \frac{\tau^{t-1}-\tau^t}{\tau^t-1}  \frac{\tau^t-1}{\tau^t} \\
    & = - \frac{r-2}{\tau}.
\end{align*}
Analogously, for $i = t$ and $j \leq v_t-3$, we can bound the powers of $n$ in \eqref{eq:tooSmall} by
\begin{align*}
 j - (\lambda j + 1) \frac{(v_t-2)}{e_t}
  &=\left(j c_t - \frac{1}{\lambda} \right)\frac{1}{1+c_t}\\
  &\leq \left((v_t - 3) c_t - \frac{1}{\lambda} \right)\frac{1}{1+c_t}\\
  &=\left(\frac{\tau^t-1}{\lambda} \frac{1}{\tau^t-1} - c_t - \frac{1}{\lambda} \right) \frac{1}{1+c_t} && \text{(by \eqref{eq:v_t-2} and \eqref{eq:c_t})}\\
  &=\frac{-c_t}{1+c_t} = \frac{-1/(\tau^t-1)}{1+1/(\tau^t-1)}\\
  &=\frac{-1}{\tau^t}.
\end{align*}
We can use these estimates and the fact that, by \eqref{eq:v_t-2}, for any $i \geq 1$ we have $v_i - 2  = \frac{\tau^i - 1}{\lambda} < \tau^i/\lambda$, to bound $\PP_{p_n} (T \leq t)$ from above. Indeed,
\begin{align}
\label{eq:P_nUpper}
 \PP_{p_n} (T \leq t) & \leq \sum_{i=1}^{t-1} \sum_{j=r-2}^{v_i-2} 2^{(j+2)^2} n^{-(r-2)/\tau} + \sum_{j=r-2}^{v_t-3} 2^{(j+2)^2} n^{-1/\tau^t} + o(1) \notag \\
  & \leq (t-1)(v_{t-1}-r+1) 2^{(v_{t-1})^2} n^{-(r-2)/\tau} + (v_t-r) 2^{(v_t)^2} n^{-1/\tau^t} + o(1) \notag \\
  & < t\frac{\tau^{t-1}}{\lambda}2^{\tau^{2(t-1)}/\lambda^2} n^{-(r-2)/\tau} + \frac{\tau^t}{\lambda}2^{\tau^{2t}/\lambda^2} n^{-1/\tau^t} + o(1).
\end{align}
There is some constant $C_r' > 0$ such that for all $t \geq C_r'$ we have
\[
 2^{\tau^{2(t-1)}/\lambda^2} \geq t \frac{\tau^{t-1}}{\lambda} \qquad \text{and} \qquad 2^{\tau^{2t}(\lambda^2-1)/\lambda^2} \geq \frac{\tau^t}{\lambda}.
\]
For $t < C_r'$ all three terms in \eqref{eq:P_nUpper} tend to $0$ as $n \to \infty$ and we clearly have $\PP_{p_n}(T \leq t) = o(1)$. For $t \geq C_r'$ we continue \eqref{eq:P_nUpper} to obtain
\begin{align*}
 \mathbb{P}_{p_n}(T \leq t)
  & \leq 2^{2\tau^{2(t-1)}/\lambda^2} n^{-(r-2)/\tau} + 2^{\tau^{2t}} n^{-1/\tau^t} + o(1) \\
  & \leq \exp\left(\frac{2\tau^{2t}}{\lambda^2} \log 2 - \frac{(r-2)}{\tau}\log n \right) + \exp\left(\tau^{2t}\log 2 - \frac{1}{\tau^t}\log n \right) + o(1).
\end{align*}
Thus, for $C_r' \leq t \leq \frac{\log \log n}{3\log \tau}$, with all logarithms having base $e$,
\begin{align*}
\mathbb{P}_{p_n}(T \leq t)
	&\leq \exp\left(\frac{2(\log n)^{2/3}}{\lambda^2}\log 2 - \frac{(r-2)}{\tau}\log n\right)\\
	&\qquad + \exp\left((\log n)^{2/3} \log 2  -(\log n)^{2/3} \right) + o(1) \\
	&= o(1).
\end{align*}

This completes the proof of Theorem \ref{thm:criticalProb}.
\end{proofOfStatementOfTheorem}

\section{Open problems}
\label{sec:questions}

In this paper we determine the critical probability for percolation by time $t$ in $K_r$-bootstrap percolation up to a logarithmic factor. The first obvious problem to consider is the following.
\begin{problem}
 Close the gap between Statement \eqref{stat:1} and Statement \eqref{stat:2} in Theorem \ref{thm:criticalProb}.
\end{problem}
We do not expect neither of our bounds to be sharp. However, we believe that for the range of $t$ discussed in this paper we have  $p_c(n, r, t) / n^{-(v_t-2)/e_t} \to \infty$.

The second open problem we pose here is of extremal nature. Lemma \ref{lem:minimalsSmall} tells us that minimal graphs adding $e_0$ to the graph at time $t$ have at most $(r-2) \frac{\tau^t-1}{\tau-1}+2$ vertices and $\tau^t$ edges.
\begin{problem}
 How small, both in terms of the size of the vertex set and the edge set, can minimal graphs adding $e_0$ at time $t$ be?
\end{problem}

\section*{Acknowledgement}

The authors wish to thank the referees for their very detailed feedback and, in particular, for their many suggestions that helped to improve the presentation of a number of the technical lemmas.

\bibliographystyle{amsplain}

 \bibliography{time-paper-arxiv}

\end {document}